\documentclass[11pt]{amsart}

\usepackage{amssymb}
\usepackage{amsmath}
\usepackage{amsthm}
\usepackage{enumerate}
\usepackage{graphicx}
\usepackage{caption}
\usepackage{subcaption}
\usepackage{hyperref}  
\hypersetup{colorlinks=true, citecolor=blue}


\theoremstyle{plain}
\newtheorem{thm}{Theorem}[section]
\newtheorem{prop}[thm]{Proposition}
\newtheorem{lem}[thm]{Lemma}
\newtheorem{cor}[thm]{Corollary}
\newtheorem{question}[thm]{Question}
\newtheorem{conj}[thm]{Conjecture}

\numberwithin{equation}{section}

\theoremstyle{definition}
\newtheorem{defn}[thm]{Definition}
\newtheorem{ex}[thm]{Example}

\theoremstyle{remark}
\newtheorem{remark}[thm]{Remark}

\newtheorem*{acknowledgements}{Acknowledgements}

\theoremstyle{plain}

\newcommand{\thmref}[1]{Theorem~\ref{#1}}
\newcommand{\conjref}[1]{Conjecture~\ref{#1}}
\newcommand{\propref}[1]{Proposition~\ref{#1}}
\newcommand{\secref}[1]{Section~\ref{#1}}

\newcommand{\lemref}[1]{Lemma~\ref{#1}}
\newcommand{\corref}[1]{Corollary~\ref{#1}}
\newcommand{\figref}[1]{Figure~\ref{#1}}
\newcommand{\exref}[1]{Example~\ref{#1}}

\newcommand{\eqnref}[1]{Equation~\eqref{#1}}


\newcommand{\M}{{\mathcal M}}

\newcommand{\calQ}{{\mathcal Q}}
\newcommand{\calR}{{\mathcal R}}
\newcommand{\calS}{{\mathcal S}}

\newcommand{\T}{{\mathcal T}}

\newcommand{\calX}{{\mathcal X}}

\newcommand{\X}{{\calX}}

\newcommand{\R}{{\mathbb R}}
\newcommand{\Z}{{\mathbb Z}}

\newcommand{\wtilde}{\widetilde}


\newcommand{\eps}{{\varepsilon}}



\DeclareMathOperator{\isom}{Isom}

\DeclareMathOperator{\sys}{sys}

\DeclareMathOperator{\starr}{st}



\begin{document}

\title{Hyperbolic surfaces with sublinearly many systoles that fill}

\author{Maxime Fortier Bourque}
\address{School of Mathematics and Statistics, University of Glasgow, University Place, Glasgow, United Kingdom, G12 8QQ}
\email{maxime.fortier-bourque@glasgow.ac.uk}

\begin{abstract}
For any $\eps>0$, we construct a closed hyperbolic surface of genus $g=g(\eps)$ with a set of at most $\eps g$ systoles that fill, meaning that each component of the complement of their union is contractible. This surface is also a critical point of index at most $\eps g$ for the systole function, disproving the lower bound of $2g-1$ posited by Schmutz Schaller.
\end{abstract}

\maketitle

\section{Introduction}

The moduli space $\M_{g,n}$ of Riemann surfaces of genus $g$ with $n$ punctures  is an object of great interest to many geometers and topologists. It encodes all the different complex structures, conformal structures, or hyperbolic structures (provided $2g+n > 2$) supported on a surface with given topology. The topology of moduli space is largely encoded in its orbifold fundamental group $\Gamma_{g,n}$, the mapping class group. 

All the torsion-free finite index subgroups of $\Gamma_{g,n}$ have the same cohomological dimension, which is called the \emph{virtual cohomological dimension} (vcd) of $\Gamma_{g,n}$. Harer computed the vcd of $\Gamma_{g,n}$ for all $g$ and $n \geq 0$ and found a spine (a deformation retract) for $\M_{g,n}$ with this smal\-lest possible dimension whenever $n>0$ \cite{Harer}. When $n=0$, the vcd of the mapping class group is equal to $4g-5$, but a spine of this dimension has yet to be found \cite[Question 1]{BridsonVogtmann}. The largest codimension attained so far is equal to $2$ \cite{Ji} (the space $\M_{g,0}$ has dimension $6g-6$).

In an unpublished preprint \cite{Thurston}, Thurston claimed that the set $\X_g$ of closed hyperbolic surfaces of genus $g\geq 2$ whose systoles fill forms a spine for $\M_g = \M_{g,0}$. Recall that a \emph{systole} is a closed geodesic of minimal length, and a set of curves \emph{fills} if each component of the complement of their union is simply connected. Thurston's proof that $\M_g$ deformation retracts onto $\X_g$ appears to be difficult to complete \cite{Ji}. Furthermore, the dimension of $\X_g$ is still not known, mostly because we do not understand which filling sets of curves can be systoles. Indeed, Thurston writes: \begin{quote}Unfortunately, we do not have a combinatorial characterization of collections of curves which can be the collection of shortest geodesics on a surface. This seems like a challenging problem, and until more is understood about how to answer it, there are probably not many applications of the current result.\end{quote}

The paper \cite{APP} provides some partial answers to Thurston's question. On a closed hyperbolic surface, systoles do not self-intersect and distinct systoles can intersect at most once. This obvious necessary condition is, however, far from being sufficient. Indeed, a filling set of systoles must contain at least $\sim \pi g / \log g$ curves \cite[Theorem 3]{APP}, but there exist filling collections of $\sim2\sqrt{g}$ geodesics pairwise intersecting at most once \cite[Corollary 2]{APP}. There is a discrepancy in the opposite direction as well: a closed hyperbolic surface can have at most $C g^2 / \log g$ systoles \cite[Corollary 1.4]{ParlierKissing}, but there exist filling collections with more than $g^2$ geodesics pairwise intersecting at most once \cite[Theorem 1.1]{Malestein}.

Our main result here is a construction of closed hyperbolic surfaces with filling sets of systoles containing sublinearly many curves in terms of the genus. Compare this with \cite{SchmutzManySystoles,SchmutzMoreSystoles} where surfaces with superlinearly many systoles are found. Though we are still very far\footnote{The genus $g$ in \thmref{thm:fewsystoles} grows like a tower of exponentials of length roughly $1/\eps$.} from the lower bound of $\pi g / \log g$, our examples improve upon the previous record of surfaces with filling sets of $2g$ systoles \cite[Section 5]{APP} \cite{Sanki}.

\begin{thm} \label{thm:fewsystoles}
For every $\eps>0$, there exist an integer $g\geq 2$ and a closed hyperbolic surface of genus $g$ with a filling set of at most $\eps g$ systoles.
\end{thm}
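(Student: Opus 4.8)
\emph{Overall strategy.}
The plan is to build the surfaces by iteration: start from a surface whose systoles fill, and repeatedly pass to a carefully chosen finite cover that dilutes the systoles relative to the genus. Covers are the natural tool here because a covering map $p\colon X'\to X$ is $1$-Lipschitz and the length of a closed geodesic of $X'$ is an integer multiple of the length of its projection; consequently $\operatorname{sys}(X')\ge\operatorname{sys}(X)$, with equality exactly when the monodromy fixes a sheet over some systole of $X$, and in that case the systoles of $X'$ are precisely the length-$\operatorname{sys}(X)$ lifts of systoles of $X$. In other words, \emph{no short geodesics are ever created, and no length estimate is needed} -- the entire difficulty is combinatorial: deciding which old systoles survive and whether the survivors still fill.

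\emph{Set-up.}
Fix for $X_0$ a closed hyperbolic surface carrying a filling set of systoles of some common length $\ell_0$; the surfaces of \cite[Section~5]{APP} or \cite{Sanki}, with their roughly $2g$ filling systoles, are natural candidates (after a mild perturbation to equalize the systole length if need be). Set $N_0=\#\{\text{systoles of }X_0\}$ and $c_0=N_0/g_0$, a fixed constant of order $1$. Inductively, suppose $X_n$ has systoles $\Gamma_n$ of common length $\ell_n$ that fill, with $|\Gamma_n|=N_n\le c_n g_n$. I would construct a finite cover $p\colon X_{n+1}\to X_n$ of degree $d_{n+1}$ whose monodromy $\rho\colon\pi_1(X_n)\to\operatorname{Sym}(d_{n+1})$ fixes a sheet over at least one curve of $\Gamma_n$, so that $\operatorname{sys}(X_{n+1})=\ell_n$ and the systoles of $X_{n+1}$ are exactly the length-$\ell_n$ lifts of the curves $\gamma\in\Gamma_n$ around which $\rho$ has a fixed sheet. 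Writing $\Gamma_n'\subseteq\Gamma_n$ for this sub-collection, I need to arrange that $\Gamma_n'$ is a \emph{proper} subset of $\Gamma_n$ which nevertheless still fills $X_n$: then its full preimage, which is the systole set of $X_{n+1}$, fills $X_{n+1}$, since the preimage of a complementary disk under a covering is a disjoint union of disks. Finally one records $g_{n+1}$ and the new systole count against $\operatorname{sys}$-length $\ell_{n+1}=\ell_n$ and checks that the ratio $c_{n+1}=N_{n+1}/g_{n+1}$ has strictly dropped.

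\emph{A concrete mechanism.}
The simplest instance: take $d_{n+1}=2$ and the double cover dual to an auxiliary simple closed curve $\alpha_n$. Then the systoles of $X_n$ that get stretched (to length $2\ell_n$, hence disappear) are precisely those meeting $\alpha_n$ an odd number of times, $\rho$ is trivial on $\langle\Gamma_n'\rangle$, the systole set of $X_{n+1}$ is $p^{-1}\bigl(\bigcup\Gamma_n'\bigr)$ with $2|\Gamma_n'|$ components, $g_{n+1}=2g_n-1$, and $c_{n+1}=2|\Gamma_n'|/(2g_n-1)$, which beats $c_n$ once $|\Gamma_n\setminus\Gamma_n'|$ exceeds about $c_n$. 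So the real task at each stage is to choose $\alpha_n$ (or, more generally, a monodromy representation and a cyclic cover of larger degree) so that a prescribed chunk of $\Gamma_n$ is killed while the remaining systoles still fill. Iterating until $c_n\le\eps$ yields the surface claimed in \thmref{thm:fewsystoles}.

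\emph{The main obstacle.}
The crux is keeping the filling property: one must repeatedly locate, inside a filling set of systoles, a proper sub-collection that still fills, and realize it as the systoles of a cover. Filling sets of systoles tend to be rigid -- one generally cannot simply delete a constant fraction of the curves and stay filling -- so at each stage one probably cannot afford more than a small relative decrease of $c_n$, which is why one needs on the order of $1/\eps$ stages and why $g(\eps)$ ends up as large as a tower of exponentials of that height. Concretely, the hard part is the homological bookkeeping: choosing the auxiliary curves $\alpha_n$ (or monodromy representations) so that the systoles disjoint from the ``killing'' data still fill, and doing so in a way that can be repeated without the filling sets degenerating. By contrast, the control of $\operatorname{sys}(X_{n+1})$ and the verification that its systoles are exactly lifts of systoles of $X_n$ come for free from the Lipschitz and multiplicativity properties of covers recorded at the outset.
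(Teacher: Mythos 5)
Your proposal takes a fundamentally different route from the paper. The paper constructs the surfaces directly: it builds a bordered ``block'' by gluing regular right-angled $2q$-gons along a flag-transitive map of girth $p$ (whose existence is Evans' theorem), checks that the block's boundary curves are its shortest geodesics, and then glues copies of the block along a strict polygonal graph of valence $d$ and girth $p$ (Archdeacon--Perkel, Seress--Swartz) so that the remaining polygon sides concatenate into closed geodesics of the same length. A direct area/Gauss--Bonnet count then shows there are exactly $\tfrac{4q}{(q-2)p}(g-1)$ systoles, and they fill by construction; taking $p=2^n$ large gives the result. You instead propose an iterative scheme of finite covers. The abstract bookkeeping you set up is correct for cyclic (in particular double) covers: $\operatorname{sys}(X')\ge\operatorname{sys}(X)$; equality holds iff some systole lifts at length; in that case the new systole set is the preimage of the surviving old ones, and preimages of disks are disks, so filling is inherited. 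The part you identify as the ``main obstacle'' is not, however, an unfinished technicality --- it is a genuine obstruction that the scheme as stated cannot overcome.

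Here is the problem concretely. Write $c_n=N_n/g_n$ and $a_n=k_n/g_n$, where $k_n=|\Gamma_n\setminus\Gamma_n'|$. For the degree-$2$ cover, $g_{n+1}=2g_n-1$ and $N_{n+1}=2(N_n-k_n)$, so
\[
c_{n+1}=\frac{2g_n}{2g_n-1}\,(c_n-a_n)\approx c_n-a_n .
\]
To drive $c_n$ below a fixed $\eps$, the increments $a_n$ must fail to be summable, which forces $k_n$ to grow at least like $g_n/n$. But an Euler-characteristic count constrains how many curves can be deleted from a filling set of systoles while keeping it filling: if $\Gamma$ is filling with $V$ intersection points, each of valence $4$, then $V-2V+F=2-2g$ with $F\ge 1$, so $V\ge 2g-1$. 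Deleting $k$ curves removes at least $k$ intersection points (and usually roughly $2k$), so the surviving $V'$ quickly drops below $2g-1$ and the remainder cannot fill. In the concrete starting example you cite (the $2g+2$ systoles forming a cycle, as in \cite[Section 5]{APP} and \exref{example}), the full set has $2g+2$ intersection points, so after deleting $k$ consecutive curves one is left with $2g+1-k$ intersections; keeping $\ge 2g-1$ forces $k\le 2$. Thus $a_n=O(1/g_n)$, $\sum_n a_n=O(1/g_0)$, and $c_n$ stays bounded below by a positive constant independent of $n$. The iteration therefore never reaches $c_n\le\eps$ for small $\eps$.

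To escape this, one would need a starting configuration (or intermediate configurations) of filling systoles with $\Theta(g)$ ``redundant'' curves whose removal still leaves a filling set, plus a simple closed curve $\alpha$ whose $\mathbb Z/2$-intersection pattern on the systoles selects exactly that redundant part. Producing such a configuration is essentially as hard as the theorem itself, and nothing in the proposal constructs it. The covering machinery is a nice way to preserve lengths and filling once you have a good filling subcollection, but the core combinatorial content --- exhibiting filling systole configurations with few curves relative to the genus --- is exactly what the paper supplies through the flag-transitive maps and strict polygonal graphs of large girth, and is missing here.
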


Near a surface with a filling set of at most $\eps g$ systoles, Thurston's set $\X_g$ contains the set of solutions to the same number of equations. This should imply that $\X_g$ has codimension at most $\eps g$ in $\M_g$. However, the equations requiring the curves to have equal length can be redundant, preventing us from applying the implicit function theorem. We only manage to prove that $\X_g$ has dimension at least $4g-5$ when $g$ is even, but conjecture the following.

\begin{conj} \label{conj:dimension}
For every $\eps>0$, there exists an integer $g\geq 2$ such that $\X_g$ has dimension at least $(6-\eps)g$. 
\end{conj}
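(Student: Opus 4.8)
The plan is to work near the surface $S$ furnished by \thmref{thm:fewsystoles}, with its filling set of systoles $\gamma_1,\dots,\gamma_n$ where $n\le\eps g$, and to exhibit through $S$ a submanifold of $\X_g$ of codimension roughly $\eps g$. Consider the real-analytic subset
\[
Z \;=\; \{\, X \in \calT_g \;:\; \ell_{\gamma_1}(X) = \ell_{\gamma_2}(X) = \dots = \ell_{\gamma_n}(X) \,\}
\]
of Teichm\"uller space. I claim that, after arranging in the construction of \thmref{thm:fewsystoles} that $\gamma_1,\dots,\gamma_n$ are \emph{all} the systoles of $S$, one has $Z \subseteq \X_g$ near $S$: for $X$ close to $S$ the curves $\gamma_i$ share a common length $\ell(X)\to\sys(S)$, only finitely many closed geodesics have length below $\sys(S)+1$ on $S$ and each non-systolic one among them is strictly longer than $\sys(S)$, so all of them stay longer than $\ell(X)$ for $X$ near $S$; hence the $\gamma_i$ are exactly the systoles of $X$, and a filling collection of geodesics remains filling under a small perturbation of the metric. (Should a few extra systoles $\delta_j$ of the same length survive on $S$, one intersects $Z$ with the regions $\{\ell_{\delta_j}\ge\ell_{\gamma_1}\}$ and checks, via a separation argument for the covectors $d\ell_{\delta_j}-d\ell_{\gamma_1}$ restricted to $T_S Z$, that the dimension is not lowered.) It thus suffices to prove $\dim_S Z \ge (6g-6)-(n-1) \ge (6-\eps)g-5$; rescaling $\eps$ and taking $g$ large then gives \conjref{conj:dimension}.

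Now $Z$ is the zero locus of $\Phi = \big(\ell_{\gamma_1}-\ell_{\gamma_2},\ \ell_{\gamma_2}-\ell_{\gamma_3},\ \dots,\ \ell_{\gamma_{n-1}}-\ell_{\gamma_n}\big) \colon \calT_g \to \R^{n-1}$. Because $S$ is a critical point of the systole function, $0$ lies in the convex hull of $d\ell_{\gamma_1},\dots,d\ell_{\gamma_n}$ in $T_S^*\calT_g$, so these covectors span a subspace of dimension at most $n-1$; a short computation with this convexity relation shows that $d\Phi_S$ is surjective as soon as they span a subspace of dimension \emph{exactly} $n-1$, i.e.\ as soon as they satisfy no relation beyond the single one coming from criticality. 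In that favourable case $\Phi$ is a submersion at $S$, hence $Z$ is a codimension-$(n-1)$ submanifold near $S$ and we are done. The key step is therefore to show that $S$---or a small perturbation of it, or a generic member of the family constructed in \thmref{thm:fewsystoles}---can be taken so that $d\ell_{\gamma_1},\dots,d\ell_{\gamma_n}$ span an $(n-1)$-dimensional space; more generally it would be enough to show that $Z$ has codimension at most $n-1$ at $S$, and further relations that come from honest functional identities (symmetries of the construction rather than accidental coincidences) turn out to be harmless, even helpful.

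This last point is the main obstacle, and it is precisely the ``redundancy'' alluded to in the introduction: a priori the highly symmetric construction could impose several independent relations among the length gradients, $\Phi$ would fail to be a submersion at $S$, and one could no longer conclude that $Z$ has the expected codimension---the common zero set of $n-1$ real-analytic functions on a manifold can be a single point. To attack it I would combine the first-variation formulas of Wolpert and Kerckhoff---along a twist or earthquake supported on a multicurve $\mu$ one has $\tfrac{d}{dt}\ell_{\gamma} = \sum_{p\in\gamma\cap\mu}\cos\theta_p$---with the explicit combinatorial structure of the surfaces of \thmref{thm:fewsystoles}, which are glued from a bounded number of isometric blocks and whose systoles fall into few isometry orbits: one should then be able to exhibit enough multicurves $\mu$ so that the vectors $\big(\sum_{p\in\gamma_i\cap\mu}\cos\theta_p\big)_{i=1}^{n}$ destroy every relation but the eutactic one, or else to perturb within the finitely many gluing parameters to reach a surface of maximal rank while keeping the $\gamma_i$ of equal length and systolic. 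Carrying this out, together with the routine openness statements of the first paragraph, would give $\dim\X_g \ge 6g-6-(n-1) \ge (6-\eps)g-5$, and hence \conjref{conj:dimension}.
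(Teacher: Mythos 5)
The statement you were asked to prove is a \emph{conjecture} in the paper, not a theorem: the author explicitly writes in the introduction that ``we only manage to prove that $\X_g$ has dimension at least $4g-5$ when $g$ is even, but conjecture the following,'' and \secref{sec:dimension} is devoted to explaining exactly why the argument you sketch does not close. So there is no ``paper's proof'' for your attempt to match; your proposal is, almost verbatim, the strategy the paper lays out as unresolved.

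Concretely, the gap is the one you flag yourself in your third paragraph, and it is not minor. You hope that the covectors $d\ell_{\gamma_1},\dots,d\ell_{\gamma_n}$ at $S$ satisfy ``no relation beyond the single one coming from criticality,'' so that the map $\Phi$ is a submersion and $Z$ has codimension $n-1$. This is false in general for the surfaces of \thmref{thm:fewsystoles}. Already for the genus-$g$ example with $q=g+1$, $p=2$ (\exref{example}), the paper cites Schmutz Schaller to record that the rank of $(d\ell_\alpha)_{\alpha\in\calS}$ is $2g-1$ while $|\calS|=2g+2$: there are three independent relations, not one. The extra relations come from the isometry group (the same symmetry used in \secref{sec:critical} to prove criticality forces linear dependencies), so one cannot simply assume them away. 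Your suggestion to ``perturb within the finitely many gluing parameters'' is also what the paper does, via the one-parameter family $X_\theta$ that deforms the right-angled $2q$-gons into alternating-angle $2q$-gons; the resulting twist-derivative matrix reduces, by the Wolpert--Kerckhoff cosine formula, to $\cos\theta$ times the adjacency matrix $\widetilde D$ of the systole intersection graph, and one needs $\det\widetilde D\neq 0$. The paper verifies this only for girth $2$ with $g$ even (yielding \thmref{thm:dimension}, dimension $\geq 4g-5$, far below $(6-\eps)g$) and for one girth-$3$ example of genus $6$; whether it holds for some sequence of graphs of girth $\to\infty$ is posed as an open Question at the end of \secref{sec:dimension}. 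Until that determinant computation (or a substitute, such as finding a filling subtree $\calR\subset\calS$ with $I_\calR$ a tree on an even number of vertices) is carried out for graphs of unbounded girth, the conjecture remains open and your outline is not a proof.
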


On the other hand, we can prove that a closely related spine, the Morse--Smale complex for the systole function, has dimension much larger than the virtual cohomological dimension of the mapping class group.

In a series of papers \cite{SchmutzMaxima,SchmutzMoreExamples,SchmutzSurvey,SchmutzMorse}, Schmutz Schaller initiated the study of the systole function $\sys: \M_{g,n} \to \R_+$, which records the length of any of the shortest closed geodesics on a surface. He proved that the systole function is a topological Morse function on the Teichm\"uller space $\T_{g,n}$ whenever $n>0$ \cite{SchmutzMorse} and Akrout  extended this result to $n=0$ (and to a more general class of functions)  in \cite{Akrout}. 

Schmutz Schaller constructed a critical point of index $2g-1$ for the systole function in every genus $g \geq 2$ and thought it was ``quite possible'' that this was the smallest achievable index \cite[p.439]{SchmutzMorse}. He verified this hypothesis for $g=2$ by finding all the critical points in $\M_2$. If this were true in general, it would imply that the Morse--Smale complex for the systole function has the smallest possible dimension $4g-5 = (6g-6)-(2g-1)$ for a spine of $\M_g$. However, our surfaces show that no such inequality holds.

\begin{thm} \label{thm:critical}
For every $\eps>0$, there exist an integer $g\geq 2$ and a critical point of index at most $\eps g$ for the systole function  on $\T_g$.
\end{thm}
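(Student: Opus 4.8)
The plan is to deduce \thmref{thm:critical} from \thmref{thm:fewsystoles} via Akrout's description of the critical points of the systole function. For $X \in \T_g$, let $S(X)$ denote its set of systoles, and for a simple closed geodesic $\gamma$ let $d\ell_\gamma \in T_X^*\T_g$ be the differential at $X$ of the length function of $\gamma$. By Akrout's theorem \cite{Akrout}, $\sys$ is a topological Morse function and $X$ is a critical point if and only if $0$ lies in the convex hull of $\{d\ell_\gamma : \gamma \in S(X)\}$; at such a point $0$ lies in the relative interior of this hull, so the differentials are linearly dependent and the subspace $W_X$ that they span has $\dim W_X \le |S(X)| - 1$. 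Moreover the index of $X$ is at most $\dim W_X$: in any direction $v$ annihilated by every $d\ell_\gamma$, each $\ell_\gamma$ is stationary at $X$ and strictly convex along the Weil--Petersson geodesic tangent to $v$ (by Wolpert's convexity theorem), hence has a strict local minimum at $X$; as $S(X)$ is finite, $\sys$ then has a strict local minimum along the $(6g-6-\dim W_X)$-dimensional submanifold swept out by these geodesics, which forces the index to be at most $\dim W_X$. It therefore suffices to produce a critical point with at most $\eps g$ systoles.

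I would take $X$ to be the genus-$g$ surface built in the proof of \thmref{thm:fewsystoles}, with its filling set $S$ of at most $\eps g$ systoles, and verify two things. First, that $S(X) = S$, i.e.\ that $X$ has no other systoles; this should follow from the length estimates underlying the construction, which pin down the lengths of all short closed geodesics tightly enough that every closed geodesic outside $S$ is strictly longer than those in $S$. Granting this, $|S(X)| \le \eps g$. Second, that $X$ is a critical point: here I would use the symmetry of the construction. If $G \le \isom(X)$ is the symmetry group from which $X$ is assembled, then $G$ permutes $S = S(X)$, so the barycenter $\frac{1}{|S|}\sum_{\gamma \in S} d\ell_\gamma \in W_X$ is a $G$-invariant vector. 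If the representation of $G$ on $W_X$ has no trivial subrepresentation --- something I would read off directly from how $G$ acts on the curves of $S$ and on their intersection pattern --- then this barycenter is $0$, so $0 \in \mathrm{conv}\{d\ell_\gamma : \gamma \in S(X)\}$ and $X$ is a critical point. Combining the two points with the first paragraph, $X$ is a critical point of index at most $|S(X)| - 1 \le \eps g - 1$, in particular at most $\eps g$, which proves \thmref{thm:critical}.

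The crux is precisely these two verifications, neither of which is contained in the statement of \thmref{thm:fewsystoles}: one must know that the construction leaves no room for extra systoles, and that its symmetry group is rich enough to place $0$ in the interior of the convex hull of the systole differentials. If the symmetry is not quite sufficient on its own, a natural fallback is to replace $X$ by a maximiser of $\sys$ over a $G$-invariant compact neighbourhood of $X$ in $\T_g$, or to perturb $X$ within the parametrised family used in the construction; the delicate part of any such fallback is to check that the filling set of systoles survives and that no additional systoles appear whose differentials would push $\dim W_X$ above $\eps g$.
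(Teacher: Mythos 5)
Your overall strategy is the same as the paper's: reduce to Akrout's characterization of critical points (eutaxy, with index equal to the rank of $(d\ell_\alpha)_{\alpha\in\calS}$, hence at most $|\calS|$), verify that the filling set of red and blue curves is the \emph{entire} set of systoles (this is indeed established in \propref{prop:count}), and then use the isometry group of $X$ to force eutaxy. The reduction and the bound $|\calS|\le\eps g$ are fine. One small imprecision: Akrout's criterion is that $0$ lies in the \emph{relative interior} of the convex hull of the $d\ell_\gamma$ (equivalently, the dual formulation via the implication $d\ell_\alpha(v)\ge 0\ \forall\alpha \Rightarrow d\ell_\alpha(v)=0\ \forall\alpha$), not merely in the convex hull; you use the barycenter, which does land in the relative interior, so the direction you need survives.

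The genuine gap is in the symmetry step. Your sufficient condition --- that the representation of $G=\isom(X)$ on the span $W_X$ of the differentials has no trivial subrepresentation --- does \emph{not} hold for these surfaces in general. The paper shows (\propref{prop:isom}) that $\isom(X)$ acts transitively on the $(2,2,2,q)$-quadrilaterals tiling $X$, so the quotient orbifold $X/\isom(X)$ is either a $(2,4,2q)$-triangle or a $(2,2,2,q)$-quadrilateral. In the quadrilateral case the fixed subspace $(T_X\T_g)^{G}$ is one-dimensional (the orbifold has a $1$-parameter deformation space), so there is a trivial subrepresentation, and a priori the $G$-invariant barycenter $\frac{1}{|\calS|}\sum_\gamma d\ell_\gamma$ could be nonzero on it. Closing this requires an actual computation, which the paper does: along the invariant deformation the quadrilateral's two sides $a,b$ opposite the angle $\pi/q$ satisfy $\sinh a\sinh b=\cos(\pi/q)$, so the red and blue systole lengths have derivatives of opposite sign, and nonnegativity of all $d\ell_\alpha(w)$ forces them all to vanish. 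Your fallback (maximizing $\sys$ over a $G$-invariant neighbourhood, or perturbing within the family) does not obviously produce a point with at most $\eps g$ systoles, so it does not repair the argument. A second, lesser, deferral: determining $\isom(X)$ and its transitivity on quadrilaterals is itself nontrivial and uses the flag-transitivity of the map $M$ and the isotropy of the gluing graph $G$; your proposal treats this as something to be ``read off,'' but it is a substantive part of the proof.
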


\subsection*{Organization} The surfaces arising in Theorems \ref{thm:fewsystoles} and \ref{thm:critical} are built in two steps, in a similar fashion as the local maxima from \cite{localmax}. First,  in \secref{sec:block}, we define a buil\-ding block (depending on some parameters) which is a surface whose systoles are the boundary components. This surface is modelled on a flag-transitive surface map (a generalization of Platonic solids) and can be cut into isome\-tric right-angled polygons along a collection of geodesic arcs. We then glue building blocks together according to the combinatorics of certain graphs of large girth with strong transitivity properties in \secref{sec:gluing}. We do this in such a way that the boundaries of the blocks remain systoles in the larger surface and that the arcs in the blocks connect up to form systoles as well (see \secref{sec:systoles}). In \secref{sec:isometries}, we show that $X / \isom(X)$ is isometric to a triangle or a quadrilateral. This easily implies that $X$ is a critical point of the systole function, which we prove in \secref{sec:critical}. Finally, we discuss our failed attempt to prove \conjref{conj:dimension} in \secref{sec:dimension}. 

\begin{acknowledgements}
I thank the anonymous referee for their useful comments and corrections.
\end{acknowledgements}

\section{Building blocks}  \label{sec:block}

\subsection*{Graphs}

A \emph{graph} is a $1$-dimensional cell complex, where there can be multiple edges between two vertices and edges from vertices to themselves. The \emph{valence} of a vertex in a graph is the number of half-edges adjacent to it. If every vertex in a graph has the same valence, then this number is called the valence of the graph. 

\subsection*{Flag-transitive maps}

A \emph{map} $M$ is a graph embedded on a surface $S$ such that the closure of each complementary component is an embedded closed disk (called a \emph{face} of the map). All maps considered in this paper will be \emph{orientable}, meaning that the surface $S$ is required to be orientable. If all the faces of a map $M$ have the same number $p$ of edges and all the vertices have the same valence $q$, then $M$ is said to have \emph{type} $\{p,q\}$.

A \emph{flag} in a map is a triple consisting of a vertex $v$, an edge $e$ containing $v$, and a face $f$ containing $e$. A \emph{map-automorphism} of $M$ is an automorphism of the underlying graph which can be realized by a homeomorphism of the surface $S$. A map is \emph{flag-transitive}\footnote{These maps are usually called \emph{regular}, but if we stuck to standard terminology, this word would be used with five different meanings throughout the paper.} if its group of map-automorphisms acts transitively on flags. 

Any flag-transitive map has type $\{p,q\}$ for some $p\geq 1$ and $q\geq 2$. The five Platonic solids are the only flag-transitive maps on the sphere with $p,q\geq 3$; their types are $\{3,3\}$, $\{4,3\}$, $\{3,4\}$, $\{5,3\}$ and $\{3,5\}$. Beach balls assembled from $q$ spherical bigons are flag-transitive maps of type $\{2,q\}$.

\subsection*{Maps of large girth}

A \emph{cycle} in a graph is a sequence of oriented edges $(e_1,\ldots,e_k)$ such that the endpoint of $e_i$ coincides with the starting point of $e_{i+1}$ for every $i \in \Z_k$. Cycles are considered up to cyclic permutation of their edges and reversal of orientation. The \emph{length} of a cycle is the number of edges that it uses. A cycle  is \emph{non-trivial} if it cannot be homotoped to a point by deleting \emph{backtracks}, that is, consecutive edges (modulo $k$) with opposite orientations. The \emph{girth} of a graph is the length of any of its shortest non-trivial cycles. These shortest non-trivial cycles will be called \emph{girth cycles}. A graph of girth at most $2$ is often called a \emph{multigraph}, and a graph of girth larger than $2$ is \emph{simple}.

The girth of a flag-transitive map $M$ of type $\{p,q\}$ is at most $p$ since the faces are non-trivial cycles of length $p$. If $M$ is finite, then one can actually unwrap all the cycles shorter than $p$ by taking a suitable finite normal cover, thereby obtaining a finite flag-transitive map $N$ of girth $p$ \cite[Theorem 11]{Evans}. That such covers exist follows from Mal'cev's theorem on the residual finiteness of finitely generated linear groups \cite{Malcev}.

\begin{thm}[Evans] \label{thm:reg_map}
For any $p,q \geq 2$, there exists a finite flag-transitive map of type $\{p,q\}$ and girth $p$.
\end{thm}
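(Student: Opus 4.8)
The plan is to prove Theorem~\ref{thm:reg_map} (Evans' theorem) by starting from an arbitrary flag-transitive map of type $\{p,q\}$ and passing to a finite normal cover that unwinds all short cycles. First I would exhibit a single flag-transitive map $M_0$ of type $\{p,q\}$ to get started: for $p,q\geq 3$ these come from the theory of regular maps (e.g. the universal $\{p,q\}$ tessellation of $\H^2$, $\R^2$, or $S^2$ and any of its finite-index reflection-group quotients; or Platonic solids in the spherical cases), while for $p=2$ the beach balls mentioned in the text, and for $q=2$ the cycle-graph maps on the sphere, handle the degenerate cases. In all cases the underlying surface group (or more precisely the orientation-preserving subgroup of the full $(2,p,q)$ triangle group) is finitely generated, so one has a flag-transitive map with vertex set, edge set, and face set all finite, or at least one that is a quotient of the regular tessellation by a finite-index normal subgroup.

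The core of the argument is to kill the short non-trivial cycles. Let $\Delta$ be the $(2,p,q)$ triangle group (the full automorphism group of the universal tessellation $\widetilde M$ of type $\{p,q\}$) and let $\Delta^+$ be its orientation-preserving index-two subgroup; a finite flag-transitive map of type $\{p,q\}$ corresponds to a finite-index normal subgroup $K \trianglelefteq \Delta$ with $K$ torsion-free (acting freely on flags). The faces of $\widetilde M$ are $p$-cycles, and every non-trivial cycle of length $<p$ in $\widetilde M$, being a non-trivial element of the surface-like group $\pi_1$ of the quotient, maps to a non-identity element; more usefully, the non-trivial reduced cycles of bounded length in any quotient map lift to finitely many $\Delta$-orbits of paths in $\widetilde M$, each represented by a nontrivial element $\gamma \in \Delta$ (or rather in $K$ for the given quotient). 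Since $\Delta$ is a finitely generated linear group, Mal'cev's theorem \cite{Malcev} gives residual finiteness: there is a finite-index normal subgroup $K' \trianglelefteq \Delta$ contained in $K$ avoiding this finite list of nontrivial elements. Replacing $K'$ by the intersection of its $\Delta$-conjugates (still finite index since $[\Delta:K']<\infty$) and, if necessary, by a torsion-free finite-index subgroup, we obtain $N := \widetilde M / K'$: a finite map, flag-transitive because $K' \trianglelefteq \Delta$, of type $\{p,q\}$ because $K' \leq K$ respects the tessellation structure, and of girth $\geq p$ because no nontrivial cycle of length $<p$ survives. Combined with the a priori bound girth $\leq p$ coming from the faces, this gives girth exactly $p$, as required.

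The step I expect to be the main obstacle is the bookkeeping that translates ``short non-trivial cycles in the cover'' into ``finitely many nontrivial group elements to be avoided,'' done uniformly enough that a single application of residual finiteness suffices and so that the resulting cover is still a bona fide map of type $\{p,q\}$ (not merely a graph cover). The subtlety is that a cycle in the $1$-skeleton need not close up after a single lift, and one must argue that the set of $\Delta^+$-translation-classes of reduced edge-paths of length $<p$ based at a fixed flag is finite and consists of nontrivial elements of $\Delta^+$; the local finiteness of $\widetilde M$ makes this routine but it must be stated carefully. A secondary technical point is ensuring $K'$ can be taken normal in the \emph{full} triangle group $\Delta$ (to get flag-transitivity including orientation-reversing symmetries, if those are wanted) rather than just in $\Delta^+$ — this costs only a further intersection of finitely many conjugates. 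Everything else (existence of $M_0$, the girth $\le p$ bound, torsion-freeness via passing to a deeper subgroup) is standard, so the whole proof is short, and indeed the excerpt attributes it to \cite[Theorem 11]{Evans} with exactly this outline.
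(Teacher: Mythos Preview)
Your proposal is correct and follows essentially the same approach that the paper indicates: the paper does not give its own proof of this theorem but merely cites \cite[Theorem 11]{Evans} and remarks in the preceding sentence that the required finite normal covers exist by Mal'cev's theorem on residual finiteness of finitely generated linear groups. Your write-up is a faithful and accurate expansion of exactly that outline, including the correct identification of the two mild technical points (finiteness of the set of short-cycle classes to be killed, and normality in the full triangle group $\Delta$ rather than just $\Delta^+$).
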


See also \cite{Nedela} and \cite{Siran} for constructive proofs of this result.

\subsection*{Regular polygons}

Let $q\geq 3$. Up to isometry, there exists a unique polygon $P$ in the hyperbolic plane with $2q$ sides of the same length $L$ and all interior angles equal to $\pi/2$. We will call $P$ the \emph{regular right-angled $2q$-gon}. By connecting the center of $P$ to the midpoint of a side and one of its vertices, we obtain a triangle with interior angles $\pi/2$, $\pi/4$ and $\pi/2q$ and a side of length $L/2$ from which we obtain the equation
\begin{equation} \label{eq:trig}
\cosh(L/2)= \cos(\pi/2q)  / \sin(\pi/4) = \sqrt{2}  \cos(\pi/2q)
\end{equation}
(see \cite[p.454]{Buser}). We color the sides of $P$ red and blue in such a way that adjacent sides have different colors.

\begin{lem} \label{lem:arc_in_poly}
Any arc $\alpha$ between two disjoint sides in the regular right-angled $2q$-gon $P$ has length at least $L$, with equality only if $\alpha$ is a side of $P$.
\end{lem}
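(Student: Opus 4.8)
The plan is to use a symmetry of $P$ to reduce the statement to a one‑variable minimisation, and then a trigonometric estimate. Since every interior angle of $P$ equals $\pi/2<\pi$, the polygon is convex, so I may replace $\alpha$ by the geodesic segment with the same endpoints; this does not increase the length and keeps $\alpha$ an arc between the same two disjoint sides, say $s_i$ and $s_j$ (label the sides cyclically $s_1,\dots,s_{2q}$, and let $\ell_m$ denote the complete geodesic containing $s_m$). Disjointness means the cyclic distance $k$ between $s_i$ and $s_j$ satisfies $2\le k\le q$ after relabelling. Let $\delta$ be the geodesic through the centre $O$ of $P$ bisecting the angle $\angle m_iOm_j=k\pi/q$ between the midpoints $m_i,m_j$ of $s_i,s_j$; reflection in $\delta$ is one of the $2q$ symmetries of $P$ and it interchanges $s_i$ and $s_j$. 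Since $s_i\cap s_j=\varnothing$, this reflection fixes no point of $s_i\cup s_j$, so the chord $\delta\cap P$ separates $s_i$ from $s_j$ inside $P$; hence $\alpha$ crosses $\delta$ at some point $z$, and with $p\in s_i,\ p'\in s_j$ the endpoints of $\alpha$,
\[
\operatorname{length}(\alpha)\ \ge\ d(p,z)+d(z,p')\ \ge\ d(s_i,\delta)+d(\delta,s_j)\ =\ 2\,d(s_i,\delta).
\]
So it suffices to prove $d(s_i,\delta)\ge L/2$, with equality only when $k=2$.

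To compute $d(s_i,\delta)$, parametrise $s_i$ by signed arclength $\sigma\in[-L/2,L/2]$ from $m_i$, with $\sigma>0$ towards $s_j$. Let $r$ be the inradius of $P$; the same right triangle used for \eqref{eq:trig} gives $\cosh r=1/(\sqrt2\,\sin(\pi/2q))$ and $Om_i\perp s_i$. For the point $x(\sigma)$ of $s_i$, the right triangle $Om_i x(\sigma)$ yields $\sinh d(O,x)\cos\angle m_iOx=\sinh r\cosh\sigma$ and $\sinh d(O,x)\sin\angle m_iOx=\sinh\sigma$, and $\angle(m_iO,\delta)=k\pi/2q$. Using the standard formula $\sinh d(x,\delta)=\sinh d(O,x)\,\sin\!\big(k\pi/2q-\angle m_iOx\big)$ for the distance from a point to a geodesic through $O$, this expands to
\[
\sinh d(x(\sigma),\delta)=g_k(\sigma):=\sin\tfrac{k\pi}{2q}\,\sinh r\,\cosh\sigma-\cos\tfrac{k\pi}{2q}\,\sinh\sigma .
\]
Thus $d(s_i,\delta)=\arcsinh\big(\min_{|\sigma|\le L/2}g_k(\sigma)\big)$, and I must show $\min_{|\sigma|\le L/2}g_k\ge\sinh(L/2)=\sqrt{\cos(\pi/q)}$ (the value coming from \eqref{eq:trig}), with equality only if $k=2$.

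Writing $g_k(\sigma)=A_k\cosh\sigma-B_k\sinh\sigma$ with $A_k=\sin\tfrac{k\pi}{2q}\sinh r\ge 0$ and $B_k=\cos\tfrac{k\pi}{2q}\ge 0$, the crux is the inequality
\[
A_k^2-B_k^2\ \ge\ \cos(\pi/q),
\]
with equality iff $k=2$; since $\sinh^2 r=\cos(\pi/q)/(2\sin^2(\pi/2q))$, this is equivalent to $\dfrac{\sin^2(k\pi/2q)}{2\sin^2(\pi/2q)}\ge 1+\dfrac{\cos^2(k\pi/2q)}{\cos(\pi/q)}$, whose two sides agree at $k=2$ (using $\cos(\pi/q)=2\cos^2(\pi/2q)-1$) and are, for $2\le k\le q$, respectively increasing and decreasing in $k$. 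Granting this, $A_k>B_k\ge 0$, so $g_k(\sigma)=\sqrt{A_k^2-B_k^2}\,\cosh(\sigma-\sigma_0)$ with $\tanh\sigma_0=B_k/A_k$; and $B_k/A_k$ is decreasing in $k$ with value $\tanh(L/2)$ at $k=2$, so $\sigma_0\in[0,L/2]\subseteq[-L/2,L/2]$ and hence $\min_{|\sigma|\le L/2}g_k=\sqrt{A_k^2-B_k^2}\ge\sqrt{\cos(\pi/q)}=\sinh(L/2)$, with equality only if $k=2$. Therefore $\operatorname{length}(\alpha)\ge 2\,d(s_i,\delta)\ge L$.

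For the equality case, $\operatorname{length}(\alpha)=L$ forces $k=2$; then $\delta\perp\ell_{i+1}$ at the midpoint of $s_{i+1}$, the value $d(s_i,\delta)=L/2$ is attained uniquely by the common perpendicular segment of $\ell_i$ and $\delta$ (namely half of $s_{i+1}$), and tracing the equalities above shows $z$ is the midpoint of $s_{i+1}$ and $\alpha$ is this segment together with its mirror image, i.e.\ $\alpha=s_{i+1}$, a side of $P$. The reduction via the separating reflection axis is the soft part; the main work is the minimisation of $g_k$ — in particular checking that the critical point $\sigma_0$ lies in $[-L/2,L/2]$ and establishing the monotonicity in $k$ that isolates the extremal case $k=2$.
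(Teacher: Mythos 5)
Your proof is correct, but it follows a genuinely different route from the paper's. The paper's argument is synthetic: take a length-minimizing arc $\alpha$, observe it must be geodesic and orthogonal to $\partial P$ at both ends, and then, assuming the endpoints are separated by $m>1$ sides, reflect part of $\alpha$ across a main diagonal of $P$ emanating from a corner of one of the two sides it joins. The unfolded arc $\gamma=\alpha_-\cup R_d(\alpha_+)$ still joins two disjoint sides and has the same length, so by minimality it must itself be a perpendicular geodesic — impossible because it has a corner at the crossing point (one checks $\alpha\not\perp d$ by a Gauss--Bonnet/angle-sum argument). Hence $m=1$ and $\alpha$ is the unique common perpendicular of the two lines carrying the sides, i.e.\ a side of $P$. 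Your argument instead exploits the dihedral symmetry of $P$: the reflection $R_\delta$ swapping $s_i$ and $s_j$ has axis $\delta$ separating them, forcing any arc to cross $\delta$ and giving $\operatorname{length}(\alpha)\ge 2\,d(s_i,\delta)$; you then parametrize $s_i$ and minimize the explicit function $g_k(\sigma)=A_k\cosh\sigma-B_k\sinh\sigma$, proving $A_k^2-B_k^2\ge\cos(\pi/q)$ with equality iff $k=2$, and track the critical point $\sigma_0$ to handle the equality case. Both proofs hinge on a reflection, but the paper's reflection is an ``unfolding'' that converts a candidate minimizer into a non-smooth competitor, whereas yours is the symmetry swapping the two target sides, reducing the problem to a one-variable trigonometric estimate. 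The paper in fact remarks immediately after its proof that a trigonometric proof exists (citing \cite{APP}); your approach is in that spirit, and has the advantage of producing the exact minimum distance $d(s_i,\delta)$ for each separation $k$, while the paper's is shorter and avoids all computation. All the steps in your argument check out: the separation of $s_i$ from $s_j$ by $\delta\cap P$, the formulas for $\sinh r$, the hyperbolic-trig identities in the right triangle $Om_ix(\sigma)$, the rewriting $g_k(\sigma)=\sqrt{A_k^2-B_k^2}\cosh(\sigma-\sigma_0)$, the monotonicity in $k$ isolating $k=2$, and the identification $\sigma_0=L/2$ at $k=2$ pinning down $\alpha=s_{i+1}$ in the equality case.
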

\begin{proof}
Let $\alpha$ have minimal length among such arcs. Then $\alpha$ must be geodesic and orthogonal to $\partial P$ at its endpoints.  These endpoints are separated by $m$ sides of $P$ in one direction and $n$ sides in the other, where $m+n+2=2q$ and $m\leq n$. First suppose that $m>1$.  Let $d$ be a main diagonal of $P$ which is linked with $\alpha$ and has one endpoint at an extremity of one of the two sides of $P$ joined by $\alpha$. Let $z$ be the intersection point between $\alpha$ and $d$, and let $\alpha_\pm$ be the two components of $\alpha \setminus\{ z\}$ labelled in such a way that $\alpha_+$ and $d$ have endpoints in a common side of $P$. If $R_d$ denotes the reflection about $d$, then the arc $\gamma=\alpha_- \cup R_d(\alpha_+)$ has the same length as $\alpha$ and joins two disjoint sides of $P$ (because $m>1$). By minimality, $\gamma$ must be geodesic and orthogonal to $\partial P$, which is absurd. This shows that $m=1$, in which case $\alpha$ is a side of $P$ (the orthogonal segment between two geodesics in the hyperbolic plane is unique when it exists).  
\end{proof}

One can also prove this using trigonometry (see \cite[p.91]{APP}).

\subsection*{Gluing regular polygons along maps}

Let $M$ be an oriented map of type $\{p,q\}$ where $q\geq 3$. Let $P$ be the unique right-angled regular hyperbolic $2q$-gon with sides colored red and blue as above. We now define a hyperbolic surface $B$ modelled on $M$. For each vertex $v\in M$, take a copy $P_v$ of $P$. The blue sides of $P_v$ are labelled in counterclockwise order by the edges adjacent to $v$ in $M$, which come with a cyclic ordering from the orientation. For each edge $e=\{u,v\}$ in $M$, we glue the polygons $P_u$ and $P_v$ along their sides labelled $e$ by an orientation-reversing isometry. The resulting surface is denoted $B$ and will be called a \emph{block} in the sequel. The polygons $P_v \subset B$ are its \emph{tiles}.

Topologically, $B$ is the same as the surface $S \supset M$ with a hole cut out in each face. Indeed, if we join the center of each polygon $P_v$ to the midpoints of its blue sides, we obtain an embedded copy of $M$ in $B$. Since each $P_v$ deformation retracts onto the star $M\cap P_v$, the surface $B$ deformation retracts onto $M$. Each boundary component of $B$ is the concatenation of $p$ red sides of polygons $P_v$ coming from the $p$ vertices $v$ around a face of $M$. In particular, each boundary component of $B$ has length $p L$, where $L$ is the positive number implicitly defined by \eqnref{eq:trig}.

\begin{lem} \label{lem:systoles_in_block}
Let $M$ be a map of type $\{p,q\}$ and girth $p$, where $p \geq 2$ and $q \geq 3$. Then the systoles in $B$ are the boundary geodesics, of length $p L$.
\end{lem}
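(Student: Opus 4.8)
The target is to show every systole in the block $B$ is a boundary geodesic of length $pL$. The strategy splits into two halves: a lower bound (any non-peripheral closed geodesic has length $\geq pL$, with the peripheral ones achieving equality) and the observation that the boundary geodesics are mutually non-homotopic and indeed geodesics. The second half is essentially already in place from the construction: each boundary component is a concatenation of $p$ red sides, each of length $L$, meeting at right angles inside the gluing, hence is a smooth closed geodesic of length exactly $pL$; and since $B$ deformation retracts onto $M$, distinct boundary components represent distinct free homotopy classes. So the content is the lower bound.

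For the lower bound I would argue as follows. Let $\gamma$ be any closed geodesic in $B$ that is not one of the boundary geodesics. Consider its intersection with the $1$-skeleton formed by the blue sides of the tiles (equivalently, the embedded copy of $M$). If $\gamma$ is disjoint from all blue sides, then $\gamma$ lies in a single tile $P_v$, so $\gamma$ is a closed geodesic inside a convex hyperbolic polygon — impossible unless $\gamma$ runs along $\partial P_v$, and the only full boundary-geodesic type curves there are the red boundary pieces, which are the peripheral ones. Otherwise $\gamma$ crosses blue sides; cut $\gamma$ along these crossing points into arcs $\alpha_1,\dots,\alpha_k$, each contained in a single tile and each running between two distinct blue sides of that tile (it cannot return to the same blue side, since a geodesic meets a geodesic at most once and the blue side is a geodesic segment — more carefully, an arc from a side back to itself in a convex polygon would have to be degenerate). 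Now the key input is \lemref{lem:arc_in_poly}: each such arc $\alpha_i$ has length at least $L$. Hence $\mathrm{length}(\gamma) \geq kL$, and it remains to show $k\geq p$. This is where the girth hypothesis enters: the cyclic sequence of blue sides crossed by $\gamma$ traces out a closed edge-path (a cycle) in $M$, and I must argue this cycle is non-trivial (has no backtracks that cancel, or rather, descends to a non-trivial element of $\pi_1$), so its length is at least the girth $p$ of $M$. Since $B$ deformation retracts to $M$ and $\gamma$ is essential (being a closed geodesic other than a boundary — and even boundary curves are essential, but we've excluded them), the corresponding cycle in $M$ is non-trivial, so $k\geq p$ and $\mathrm{length}(\gamma)\geq pL$.

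To finish, I need to handle the equality case and confirm that nothing shorter than $pL$ exists: if $\mathrm{length}(\gamma) = pL$ with $\gamma$ crossing blue sides, then $k = p$ and each arc $\alpha_i$ has length exactly $L$, so by the equality clause of \lemref{lem:arc_in_poly} each $\alpha_i$ is a side of its tile — but the blue sides are where arcs start and end, so each $\alpha_i$ must be a red side, and the concatenation $\gamma$ is then forced to be one of the boundary geodesics, contradicting our assumption. Therefore every non-peripheral closed geodesic has length strictly greater than $pL$, while the boundary geodesics have length exactly $pL$; combined with the fact that the boundary geodesics are genuine closed geodesics realizing $pL$, this shows the systole length of $B$ is $pL$ and the systoles are precisely the boundary geodesics.

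**Main obstacle.** The delicate point is the combinatorial step: showing that the closed edge-path in $M$ determined by $\gamma$ is non-trivial in the girth sense, so that it has length $\geq p$. One must rule out that $\gamma$, while essential in $B$, could trace a null-homotopic (backtracking) loop in $M$ — this requires relating essentiality of $\gamma$ in $B$ to non-triviality of its trace, using that the deformation retraction $B \to M$ sends $\gamma$ to exactly this edge-path up to homotopy. A subtlety is that consecutive arcs $\alpha_i, \alpha_{i+1}$ might enter and leave a tile through the same blue side on two sides of the gluing, which would look like a backtrack; one must check that a geodesic $\gamma$ cannot do this (it would have to cross the blue geodesic segment twice with the two crossing arcs lying in the two adjacent tiles, and a local convexity/angle argument rules this out), so that the trace is genuinely reduced and its length is exactly $k$.
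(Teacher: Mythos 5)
The proposal is correct and takes essentially the same approach as the paper: both track the sequence of tiles the closed geodesic visits, use the deformation retraction of $B$ onto the dual copy of $M$ together with the girth hypothesis to conclude that at least $p$ tiles are crossed, and apply Lemma~\ref{lem:arc_in_poly} to each arc of $\gamma$ in a tile. Your explicit discussion of why arcs cannot return to the same blue side, and of backtracks in the trace cycle, fills in details the paper leaves implicit --- although note that backtracks need not be ruled out at all, since the girth is by definition a lower bound on the length of \emph{any} non-trivial cycle, backtracks included.
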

\begin{proof}
Let $\gamma$ be a systole in $B$. As explained above, the map $M$ embeds in $B$ as the dual graph to the decomposition into the $2q$-gons $P_v$. Let $\pi :B\to M$ be the nearest point projection. The image $\pi(\gamma)$ must be non-trivial in $M$ since $\gamma$ is non-trivial in $B$ and $\pi$ is a deformation retraction. It follows that the combinatorial length of $\pi(\gamma)$ in $M$ is at least $p$. In other words, $\gamma$ intersects at least $p$ tiles $P_v$, joining distinct blue sides each time.  By \lemref{lem:arc_in_poly}, the length of $\gamma \cap P_v$ is at least $L$ for any tile $P_v$ that $\gamma$ intersects. The total length of $\gamma$ is therefore greater than or equal to $p L$. If equality occurs, then $\gamma$ must be a concatenation of red arcs, that is, a boundary geodesic.
\end{proof}

\begin{cor} \label{cor:arc_boundary_to_self}
Let $M$ be a map of type $\{p,q\}$ and girth $p$, where $p \geq 2$ and $q \geq 3$. Then any arc from a boundary component to itself in $B$ which cannot be homotoped into $\partial B$ has length strictly larger than $p L/2$.
\end{cor}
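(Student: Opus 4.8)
The plan is to derive this from \lemref{lem:systoles_in_block}, which says that the systole length of $B$ equals $pL$, by closing the arc up into two closed curves and comparing lengths. Let $\alpha$ be a properly embedded arc in $B$ with both endpoints on one boundary component $\beta$, and assume $\alpha$ cannot be homotoped into $\partial B$. If the two endpoints of $\alpha$ coincide, then $\alpha$ is itself an essential, non-peripheral simple loop, so $\ell(\alpha)\geq pL>pL/2$ directly; hence we may assume the endpoints are distinct. They split $\beta$ into two sub-arcs $\delta_1$ and $\delta_2$ with $\ell(\delta_1)+\ell(\delta_2)=\ell(\beta)=pL$, and we set $c_i=\alpha\cup\delta_i$ for $i=1,2$, two simple closed curves.

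The first step is to check that $c_1$ and $c_2$ are both homotopically essential in $B$; this is the one point in the argument that requires genuine care, and it is where the hypothesis on $\alpha$ is used. If $c_i$ were null-homotopic, then $\alpha$ would be homotopic rel endpoints to $\delta_i$ traversed backwards, which lies in $\partial B$, so $\alpha$ could be homotoped into $\partial B$ --- a contradiction. (Here one uses that ``homotopic into $\partial B$'' is meant so as to include homotopies rel endpoints, or with endpoints sliding along $\beta$.) Now, since $B$ is hyperbolic with geodesic boundary and has systole length $pL$ by \lemref{lem:systoles_in_block}, every essential closed curve in $B$ --- peripheral or not --- has length at least $pL$, its geodesic representative being a closed geodesic. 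Applying this to $c_1$ and $c_2$ and adding, together with $\ell(c_1)+\ell(c_2)=2\,\ell(\alpha)+\ell(\beta)=2\,\ell(\alpha)+pL$, gives $\ell(\alpha)\geq pL/2$.

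Finally, to upgrade this to a strict inequality, suppose $\ell(\alpha)=pL/2$. Then $\ell(c_1)=\ell(c_2)=pL$, so each $c_i$ has the same length as its geodesic representative and hence \emph{is} a systole. But $c_1$ and $c_2$ are distinct ($c_1$ contains the interior of $\delta_1$, which is disjoint from $c_2$) while sharing the entire arc $\alpha$, so they intersect in infinitely many points --- contradicting the fact that distinct systoles meet at most once. (One could instead argue directly that equality would force both $c_1$ and $c_2$ to be smooth at the endpoints of $\alpha$, which is impossible because $\delta_1$ and $\delta_2$ leave each endpoint in opposite directions along $\beta$.) Therefore $\ell(\alpha)>pL/2$, as claimed.
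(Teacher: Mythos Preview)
Your argument is correct and rests on the same idea as the paper's proof: close up $\alpha$ with a subarc of the boundary component and invoke \lemref{lem:systoles_in_block}. The paper's version is more economical: it uses only the \emph{shorter} subarc $\delta$ of $\beta$, so that $\gamma=\alpha\cup\delta$ has length at most $pL$, and then observes that the closed geodesic homotopic to $\gamma$ is \emph{strictly} shorter (since $\gamma$ has corners), contradicting \lemref{lem:systoles_in_block} directly. This yields the strict inequality in one stroke, avoiding your separate equality analysis via two curves. Your symmetrized approach is fine, but the one-curve argument is cleaner.
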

\begin{proof}
Suppose that $\alpha$ is a non-trivial arc of length at most $p L/2$ from a boundary geodesic $b$ to itself. The arc $\alpha$  followed by the shorter of the two subarcs of $b$ between its endpoints is a non-trivial closed curve $\gamma$ of length at most $p L$ in $B$. The closed geodesic homotopic to $\gamma$ is strictly shorter, contradicting \lemref{lem:systoles_in_block}.
\end{proof}

\begin{lem} \label{lem:arc_boundary_to_boundary}
Let $M$ be a map of type $\{p,q\}$ and girth $p$, where $p \geq 2$ and $q \geq 3$. Then any arc $\alpha$ from $\partial B$ to $\partial B$ which cannot be homotoped into $\partial B$ has length at least $L$, with equality only if $\alpha$ is a blue arc.
\end{lem}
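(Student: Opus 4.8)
The plan is to argue in the same spirit as \lemref{lem:systoles_in_block} and \corref{cor:arc_boundary_to_self}. Let $\alpha$ have minimal length among arcs from $\partial B$ to $\partial B$ that cannot be homotoped into $\partial B$; then $\alpha$ is geodesic and orthogonal to $\partial B$ at both endpoints. All vertices of the tiling of $B$ into copies of the regular right-angled $2q$-gon $P$ lie on $\partial B$, and exactly two tiles meet at each such vertex, glued along a blue side $\beta$; since the red side of each of these two tiles meets $\beta$ at a right angle, the inward normal to $\partial B$ at the vertex points along $\beta$. Hence if an endpoint of $\alpha$ is a tiling vertex, then $\alpha$ runs along the blue side there and, being geodesic, coincides with it: $\alpha$ is a blue arc, of length $L$. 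Otherwise both endpoints of $\alpha$ lie in the interiors of red sides, $\alpha$ is not a blue arc, and it crosses every blue side it meets transversally; the goal is then to prove that $|\alpha| > L$. Write $P_{v_0},\dots,P_{v_n}$ for the tiles met by $\alpha$ in order, $\alpha_i = \alpha \cap P_{v_i}$ for the corresponding geodesic chords, and $\beta_i$ for the blue side crossed between $\alpha_{i-1}$ and $\alpha_i$.

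The basic tool is \lemref{lem:arc_in_poly}, sharpened by a parity remark: in $P$ a red side and a blue side occupy positions of opposite parity in the cyclic order, so two such sides are either adjacent or separated by at least two sides, and in the second case every arc between them is \emph{strictly} longer than $L$ (equality in \lemref{lem:arc_in_poly} would force the arc to be the single side between them, which is impossible by parity). When $n = 0$, the chord $\alpha = \alpha_0$ joins two red sides, necessarily distinct because a geodesic chord meeting the interior of the polygon cannot have both endpoints on one side; these are disjoint, so \lemref{lem:arc_in_poly} gives $|\alpha| \geq L$ with equality only when $\alpha$ is the blue side between them, which is excluded. When $n \geq 2$, every middle chord $\alpha_i$ ($0 < i < n$) joins two distinct, hence disjoint, blue sides, so $|\alpha_i| \geq L$ by \lemref{lem:arc_in_poly}; since $|\alpha_0|$ and $|\alpha_n|$ are positive, adding up gives $|\alpha| > L$.

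The remaining case $n = 1$ is the crux. Here $\alpha = \alpha_0 \cup \alpha_1$ crosses a single blue side $\beta$, with endpoints $x, y$ in $B$; let $r_0 \subset P_{v_0}$ and $r_1 \subset P_{v_1}$ be the red sides carrying the endpoints of $\alpha$. If $r_0$ is disjoint from $\beta$ in $P_{v_0}$, or $r_1$ is disjoint from $\beta$ in $P_{v_1}$, the parity remark already gives $|\alpha| > L$. Otherwise each of $r_0$, $r_1$ meets $\beta$ at a right angle at one of the vertices $x, y$. If they meet $\beta$ at the same vertex, then $r_0 \cup r_1$ is a single geodesic, and $\alpha$ cannot be orthogonal to it at two distinct points (equivalently, $r_0$ and $r_1$ are then consecutive red sides of one boundary component and \corref{cor:arc_boundary_to_self} gives $|\alpha| > pL/2 \geq L$), so this does not happen. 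If $r_0$ meets $\beta$ at $x$ and $r_1$ meets $\beta$ at $y$, I would develop the two tiles $P_{v_0}$ and $P_{v_1}$, glued along $\beta$, into the hyperbolic plane: then $r_0$ and $r_1$ lie on geodesics perpendicular to the line carrying $\beta$ at its two endpoints, hence ultraparallel with common perpendicular $\beta$, so they are at distance exactly $|\beta| = L$, realized only between $x$ and $y$; therefore $|\alpha| \geq L$ with equality only if $\alpha = \beta$, which is excluded, giving $|\alpha| > L$ once more. I expect this last configuration -- where $\alpha$ would have to run alongside an entire blue side from one end to the other -- to be the main obstacle, and it is precisely what singles out blue arcs as the unique minimizers; every other case reduces immediately to \lemref{lem:arc_in_poly}.
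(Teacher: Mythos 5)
Your proof is correct, and at its core it rests on the same two ingredients as the paper's argument: \lemref{lem:arc_in_poly} to control chord lengths inside a tile, and the observation that when an arc crosses a single blue side between two opposite red sides, that blue side is the common perpendicular and hence the unique shortest connector. The paper compresses the case analysis by declaring up front (via \lemref{lem:arc_in_poly}, though it cites \lemref{lem:systoles_in_block}) that one may assume $\alpha$ joins consecutive sides in every tile it meets; this forces $\alpha$ to span exactly two tiles in the pattern red--blue--red and immediately lands on your crucial sub-case B2. You instead enumerate cases by the number $n$ of blue sides crossed, and within $n=1$ by how the two red endpoints sit relative to $\beta$. That granularity is harmless and the parity observation (a red side and a blue side are adjacent or separated by at least two sides, so \lemref{lem:arc_in_poly} gives a strict inequality in the latter situation) is a clean way to dispose of the non-adjacent cases. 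One small blemish: in sub-sub-case B1 you offer, as an ``equivalent'' argument, an appeal to \corref{cor:arc_boundary_to_self}; but the right conclusion there is not a length bound, it is that the configuration is impossible for a geodesic arc not homotopic into $\partial B$ (as your primary ``orthogonal at two distinct points'' argument correctly shows, since such an $\alpha$ would coincide with $r_0\cup r_1$ and hence be homotopic into the boundary). The two reasons you give are therefore not interchangeable, though the conclusion ``this does not happen'' is correct.
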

\begin{proof}
Let $\alpha$ be a geodesic arc from $\partial B$ to $\partial B$. By \lemref{lem:systoles_in_block}, we may assume that $\alpha$ joins consecutive sides of any tile $P_v$ it intersects. Since the starting point of $\alpha$ in on a red side, it has to next intersect a blue side, and then a red. This means that $\alpha$ is homotopic to a blue arc in a union $P_u \cup P_v$ of two adjacent tiles. This blue arc is shortest among all arcs in $P_u \cup P_v$ joining the same two sides, as it is orthogonal to the boundary at both endpoints. 
\end{proof}

The above results do not require the map $M$ to be finite or flag-transitive, but we will impose these conditions in the next sections.

\section{Gluing graphs} \label{sec:gluing}

In this section, we explain how to glue blocks together along certain graphs of large girth with large automorphism groups in order to get closed hyperbolic surfaces with many symmetries and few systoles. 

\subsection*{Strict polygonal graphs}

A \emph{strict polygonal graph} is a gra\-ph $G$ such that any embedded path of length $2$ in $G$ is contained in a unique girth cycle (where cycles are considered up to cyclic reordering and reversal). This notion was introduced by Perkel in his thesis \cite{PerkelThesis}. Examples of strict polygonal graphs include polygons, the tetrahedron, the dodecahedron, and the cube of any dimension. See \cite{Seress} for a short survey on the subject.

Archdeacon and Perkel \cite{ArchPerkel} found a way to double the girth of a strict polygonal graph $G$ (or any graph) by taking an appropriate normal covering space. The girth cycles in this cover $\wtilde G$ are precisely those that wrap twice around a girth cycle in $G$ under the covering map. Repeated applications of their construction yield strict polygonal graphs of arbitrarily large girth and constant valence (equal to the valence of $G$).

Seress and Swartz \cite[Theorem 3.2]{SeressSwartz} proved that any automorphism of the base graph $G$ lifts to an automorphism of the girth-doubling cover $\wtilde G$. They concluded that if $G$ is vertex transitive, edge transitive, arc transitive or 2-arc transitive, then so is $\wtilde G$. We will need an even stronger transitivity property, described in the next paragraph. 

\subsection*{Isotropic graphs}

The \emph{star} $\starr(v)$ of a vertex $v$ in a graph is the set of half-edges adjacent to $v$. A graph $G$ is \emph{locally symmetric} if for every vertex $v \in V(G)$, any bijection of $\starr(v)$ can be extended to an automorphism of $G$ that fixes $v$. We say that a graph is \emph{isotropic} if it is vertex transitive and locally symmetric. To spell it out, $G$ is isotropic if every injection $\starr(u) \hookrightarrow \starr(v)$ between stars in $G$ extends to an automorphism of $G$. 

In an isotropic graph, there is a girth cycle passing through any embedded path of length $2$, but there can be more than one.

\begin{ex}
The Petersen graph $P$  (the quotient of the dodecahedron by the antipodal involution) is an isotropic graph of valence $3$ and girth $5$ on $10$ vertices. However, $P$ is not strict polygonal since every embedded path of length $2$ is contained in two distinct girth cycles in $P$. 
\end{ex}

Lubotzky \cite{Lubotzky} constructed infinitely many isotropic Cayley graphs of any valence $d\geq 3$ and any even girth $\geq 6$ (the generators are involutions, allowing the valence to be odd). Since we want better control on the girth cycles of our isotropic graphs, we use the girth-doubling construction of Archdeacon and Perkel instead. The proof that the girth-doubling cover $\wtilde{G}$ of a graph $G$ is isotropic provided that $G$ is isotropic follows immediately from \cite[Theorem 3.2]{SeressSwartz}, which states that any automorphism of $G$ lifts to $\wtilde{G}$,  and the fact that the covering $\wtilde G \to G$ is normal, so that its deck group acts transitively on fibers.

The simplest isotropic strict polygonal graph is a pair of vertices joined by $d\geq 2$ edges. Repeated applications of the girth-doubling construction to this graph $\Theta$ yield a sequence of finite, isotropic, strict polygonal graphs of any valence and arbitrarily large girth.

\begin{thm}[Archdeacon--Perkel, Seress--Swartz] \label{thm:poly_graph}
For any $d\geq 2$ and $n\geq 1$, there exists a finite, isotropic, strict polygonal graph $G$ of valence $d$ and girth $2^n$. In fact, $G$ can be chosen to be a covering space of the bipartite graph $\Theta$ of valence $d$ on $2$ vertices, in which case the girth cycles in $G$ project to powers of girth cycles in $\Theta$ under the covering map. 
\end{thm}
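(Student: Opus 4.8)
The plan is to prove the statement by induction on $n$, using the graph $\Theta$ as the base case and the Archdeacon--Perkel girth-doubling cover to pass from girth $2^{k}$ to girth $2^{k+1}$, with isotropy propagated along the resulting tower by means of the lifting theorem of Seress and Swartz.

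For the base case $n=1$ I would take $G=\Theta$, the graph on two vertices $a,b$ joined by $d$ parallel edges. Its valence is $d$; it has no loops, and its shortest non-trivial cycles are exactly the pairs of distinct parallel edges, so its girth is $2=2^{1}$ and these pairs are its girth cycles. Any path of length two in $\Theta$ consists of two distinct parallel edges, which span a single girth cycle (the pair itself), so $\Theta$ is strict polygonal. The involution exchanging $a$ and $b$ shows that $\Theta$ is vertex transitive, and any bijection of $\starr(v)$ extends to an automorphism of $\Theta$ fixing $v$ (permute the edges at $v$ and leave the other endpoints fixed), so $\Theta$ is locally symmetric; hence $\Theta$ is isotropic. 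Finally, the identity realises $\Theta$ as a cover of itself whose girth cycles project to themselves, that is, to $2^{0}$-th powers of girth cycles of $\Theta$.

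For the inductive step, suppose $G$ is a finite, isotropic, strict polygonal graph of valence $d$ and girth $2^{k}$, realised as a cover $\pi\colon G\to\Theta$ under which every girth cycle of $G$ projects to a $2^{k-1}$-th power of a girth cycle of $\Theta$. Let $q\colon\wtilde G\to G$ be the finite normal girth-doubling cover of \cite{ArchPerkel}. By their construction $\wtilde G$ has valence $d$, it is strict polygonal, its girth equals $2^{k+1}$, and each of its girth cycles wraps exactly twice around a girth cycle of $G$ under $q$; composing with $\pi$ then realises $\wtilde G$ as a cover of $\Theta$ whose girth cycles project to $2^{k}$-th powers of girth cycles of $\Theta$. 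By \cite[Theorem 3.2]{SeressSwartz}, every automorphism of $G$ lifts to $\wtilde G$; together with the fact that the deck group of the normal cover $q$ acts transitively on each fibre, this shows that vertex transitivity of $G$ passes to $\wtilde G$. For local symmetry, fix $\tilde v\in V(\wtilde G)$, set $v=q(\tilde v)$, and let a target bijection of $\starr(\tilde v)$ be given; transporting it through the bijection $q\colon\starr(\tilde v)\to\starr(v)$ yields a bijection of $\starr(v)$, which by local symmetry of $G$ extends to an automorphism $\phi$ of $G$ fixing $v$. Lift $\phi$ to an automorphism $\tilde\phi$ of $\wtilde G$; then $q(\tilde\phi(\tilde v))=\phi(q(\tilde v))=v$, so $\tilde\phi(\tilde v)$ lies in the fibre over $v$, and there is a deck transformation $\tau$ with $\tau(\tilde\phi(\tilde v))=\tilde v$. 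The automorphism $\tau\circ\tilde\phi$ fixes $\tilde v$, and since $q\circ(\tau\circ\tilde\phi)=\phi\circ q$, its action on $\starr(\tilde v)$ corresponds through $q\colon\starr(\tilde v)\to\starr(v)$ to the action of $\phi$ on $\starr(v)$, hence realises the prescribed bijection. Thus $\wtilde G$ is locally symmetric, and therefore isotropic.

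Iterating the inductive step $n-1$ times from $G=\Theta$ yields a finite, isotropic, strict polygonal graph of valence $d$ and girth $2^{n}$ that covers $\Theta$ with its girth cycles projecting to $2^{n-1}$-th powers of girth cycles of $\Theta$, which is precisely the assertion. The place where I expect the real work to sit is the verification of the two imported facts about the girth-doubling cover --- that $\wtilde G$ has girth exactly $2^{k+1}$ with no accidental shorter cycles, each girth cycle being of the stated double-wrapping form, and that every automorphism of the base graph lifts through $q$ --- but these are exactly the content of \cite{ArchPerkel} and \cite[Theorem 3.2]{SeressSwartz}, so the only genuinely new bookkeeping here is the local-symmetry argument above. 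Its one subtlety is that one must combine the lift $\tilde\phi$ with a deck transformation in order to fix the chosen vertex $\tilde v$ upstairs while still realising an arbitrary bijection of its star; this is legitimate precisely because $q$ is normal, so its deck group acts transitively on fibres, and $q$ is a local homeomorphism near $\tilde v$.
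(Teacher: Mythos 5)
Your proposal is correct and follows essentially the same route as the paper: start from the bipartite graph $\Theta$, iterate the Archdeacon--Perkel girth-doubling cover, and use the Seress--Swartz lifting theorem together with normality of the cover (deck transitivity on fibres) to propagate isotropy up the tower. The only difference is that you spell out the local-symmetry verification in detail where the paper asserts it "follows immediately," and your bookkeeping of how girth cycles project to powers of girth cycles of $\Theta$ matches the paper's description of the girth-doubling construction.
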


\subsection*{Gluing}

We now explain how to glue copies of the block $B$ from \secref{sec:block} along a finite isotropic strict polygonal graph $G$ to get a closed hyperbolic surface $X$ with a small set of systoles that fill. 

Let $q\geq 3$, let $n\geq 1$, and write $p=2^n$. Let $M$ be a finite flag-transitive map of type $\{p,q\}$ and girth $p$ whose existence is guaranteed by \thmref{thm:reg_map}. 

Let $B$ be the block obtained by gluing regular right-angled $2q$-gons along the map $M$ as in \secref{sec:block}. Let $d$ be the number of boundary components of $B$, which is is equal to the number of faces in $M$. 

Let $G$ be a finite, isotropic, strict polygonal graph of valence $d$ and girth $p=2^n$ covering the bipartite graph $\Theta$ on two vertices as in \thmref{thm:poly_graph}, and let $\pi : G \to \Theta$ be a covering map. Let $\sigma : V(\Theta) \to \{-1,1\}$ and $\chi: E(\Theta) \to \{1,\ldots,d\}$ be bijections, where $V(\Theta)$ and $E(\Theta)$ are the sets of vertices and edges of $\Theta$ respectively. These induce proper colorings $\sigma\circ \pi$ and $\chi\circ \pi$ of the vertices and edges of $G$ respectively.

For each $v \in V(G)$, let $B_v$ be a copy of the block $B$, equipped with its standard orientation if $\sigma(v)=1$ and with the reverse orientation if $\sigma(v)=-1$. Let $b_1, \ldots , b_d$ be the boundary components of $B$ and label the boundary components of any copy $B_v$ in the same way so that the isometric identification $B_v \cong B$ preserves the indices of boundary components.

Here is how we define the closed hyperbolic surface $X$ given the above combinatorial data. For any edge $e=\{u,v\}$ in $G$, glue $B_u$ to $B_v$ by the identity map along their $j$-th boundary component, where $j=\chi\circ \pi(e)$. The surface $X$ is defined as the quotient of $\sqcup_{v \in V(G)} B_v$ by these gluings. Since the gluing maps are orientation-reversing, $X$ is an oriented surface. It has empty boundary since the coloring $\chi\circ \pi$ takes all values in $\{1,\ldots,d\}$ on the edges containing a given vertex $v$, so that all the boundary components of $B_v$ are glued. Lastly, $X$ is compact because $M$ and $G$ are finite.

The main reason for using strict polygonal graphs in this construction is so that the blue arcs in the blocks $B_v$ all close up to curves of the same length in $X$.

\begin{lem} \label{lem:blue}
Any blue arc in a block $B_v \subset X$ is part of a closed geodesic of length $p L$ in $X$.
\end{lem}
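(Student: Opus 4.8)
The plan is to track a single blue arc as it enters successive blocks and to show that, because the gluing graph $G$ is strict polygonal of girth $p$, the concatenation closes up after exactly $p$ steps into a smooth closed geodesic. First I would recall the local picture inside one block: by construction a blue arc $\alpha_0$ in $B_v$ runs between two red sides of a tile, crossing one blue side, and it is orthogonal to $\partial B_v$ at both endpoints (this is exactly the minimal arc from \lemref{lem:arc_boundary_to_boundary}). Each endpoint lies on some boundary component $b_i$ of $B_v$, at a specific point determined by the flag-transitive combinatorics of $M$. When we pass through the gluing along that boundary component — identified by the identity map with a boundary component of a neighbouring block $B_w$ — the orthogonality at the endpoint guarantees that $\alpha_0$ continues $C^1$-smoothly into a blue arc $\alpha_1$ of $B_w$ of the same length $L$. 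So the concatenation is automatically a geodesic; the only thing to verify is that after $p$ such continuations we return to the starting point and direction.

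The key step is the closing-up count, and this is where the strict polygonal hypothesis on $G$ does the work. Each time the developing blue arc crosses a boundary component of a block, it moves along one edge of the gluing graph $G$; the sequence of blocks visited traces a walk in $G$. I would argue that consecutive edges in this walk form an embedded path of length $2$ in $G$ (the arc enters $B_w$ along edge $e$ and leaves along a \emph{different} edge $e'$, because inside the block a blue arc goes from a red side of one tile, through a blue side, to a red side, landing on a different boundary component of $B_w$; the flag-transitive structure of $M$, together with the fact that $M$ has girth $p$, ensures the two boundary components met are distinct). Because $G$ is strict polygonal, this embedded path of length $2$ lies on a \emph{unique} girth cycle of $G$, which has length exactly $p=2^n$ by our choice of $G$. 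The self-consistency of the blue-arc combinatorics inside each block (all blocks are isometric copies of $B$, glued by the identity on labelled boundaries, with orientations chosen via $\sigma$) forces the walk to follow precisely this girth cycle and to return to its origin after $p$ edges, with the arc in the final block matching up with $\alpha_0$. Summing, the closed geodesic has length $p L$.

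The main obstacle I anticipate is making the combinatorial bookkeeping airtight: one must check (i) that the identification of boundary components by the labels $\chi\circ\pi$ and the orientation data $\sigma\circ\pi$ are compatible so that the blue arc genuinely continues into a \emph{blue} arc in the next block rather than getting reflected into something else, and (ii) that the point at which the arc meets each boundary component is invariant along the walk, so that when the walk closes up around the girth cycle of $G$ the two ends of the developed arc coincide exactly (both in position and direction) rather than being rotated relative to one another along the boundary geodesic. Point (ii) is essentially a consequence of the flag-transitivity of $M$: the blue arcs hit each boundary component of a block at a canonical set of $p$ equally spaced points, and the identity gluing matches these point-sets, so no rotation is introduced. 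Once these compatibilities are in place, the length computation and the $C^1$-smoothness (hence geodesy) are immediate, and the fact that the walk has length exactly $p$ gives length $pL$.
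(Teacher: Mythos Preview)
Your overall strategy matches the paper's: extend the blue arc across blocks using orthogonality, track the resulting walk in $G$, and show it closes after $p$ steps. But there is a genuine gap at the crucial step. You correctly observe that, because the blocks are glued by the identity along labelled boundaries, the continuation of the blue arc in each successive block is the ``same'' arc and therefore connects the same two labelled boundary components $b_i$ and $b_j$; hence the walk in $G$ alternates between the two edge-colours $i$ and $j$. What you do \emph{not} justify is why this $(i,j)$-alternating walk coincides with the unique girth cycle through the initial length-$2$ path. Strict polygonality alone does not give this: the subgraph of $G$ spanned by the edges of colours $i$ and $j$ is a disjoint union of even cycles (each vertex has exactly one edge of each colour), and nothing you have said prevents the cycle through $v$ from having length greater than $p$. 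The paper closes this gap by invoking the second clause of \thmref{thm:poly_graph}: since $G$ is built as an iterated girth-doubling cover of $\Theta$, every girth cycle of $G$ projects to a power of a $2$-cycle in $\Theta$, and hence its edges alternate between exactly two colours. Thus the unique girth cycle through $(u,v,w)$ is itself $(i,j)$-alternating, and since there is only one $(i,j)$-alternating cycle through $v$, the walk \emph{is} that girth cycle and has length $p$. Your phrase ``self-consistency of the blue-arc combinatorics'' is gesturing at the identity-gluing fact but does not supply this missing covering argument.

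Two smaller points. First, your description of a blue arc (``runs between two red sides of a tile, crossing one blue side'') is garbled: a blue arc \emph{is} a blue side, shared by two adjacent tiles, with its endpoints on $\partial B$; it does not cross anything. Second, the bookkeeping you flag as (ii) is handled not by flag-transitivity of $M$ but simply by the fact that the boundary gluings are the identity map on the fixed model $B$; flag-transitivity of $M$ plays no role in this lemma and is only used later for the isometry group.
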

\begin{proof}
Any blue arc $\alpha_v$ in $B_v$ connects two boundary geodesics $b_i$ and $b_j$. The block $B_v$ is glued to two other blocks $B_u$ and $B_w$ via these boundary components, and there are blue arcs $\alpha_u \subset B_u$ and $\alpha_w \subset B_w$ corresponding to $\alpha_v$ under the isometric identifications $B_u \cong B_v \cong B_w$. The concatenation $\alpha_u \cup \alpha_v \cup \alpha_w$ is geodesic since $\alpha_v$ is orthogonal to $\partial B_v$.

By our convention, the arc $\alpha_u$ (resp. $\alpha_w$)  connects the boundary components of $B_u$ (resp. $B_w$) labelled $b_i$ and $b_j$. By repeating the above reflection process with $\alpha_u$ or $\alpha_w$ instead of $\alpha_v$ (and so on), we obtain a bi-infinite path $\delta=(\ldots,u,v,w,\ldots)$ in the graph $G$ whose edges alternate between the colors $i$ and $j$. There is also a bi-infinite geodesic \[\beta = \cdots \cup  \alpha_u \cup \alpha_v \cup \alpha_w \cup \cdots\] in $X$ obtained by concatenating the corresponding blue arcs. 

Since $G$ is a strict polygonal graph, the path $(u,v,w)$ is contained in a unique non-trivial cycle $\gamma$ of length $p$ (the girth of $G$). Furthermore, \thmref{thm:poly_graph} stipulates that $\gamma$ covers a closed cycle of length $2$  in $\Theta$ under the covering map $\pi:G\to\Theta$. This cycle of length $2$ is necessarily formed by the edges $\chi^{-1}(i)$ and $\chi^{-1}(j)$ since $\pi$ respects the coloring of edges. This means that the edges of $\gamma$ alternate between the colors $i$ and $j$, and hence that the path $\delta$ wraps around $\gamma$ periodically in both directions. In other words, $\delta$ closes up after $p$ steps. Similarly, the geodesic $\beta$ is closed and its length is equal to $pL$ since each of its $p$ subarcs has length $L$.
\end{proof}

Note that we have not used the hypotheses that $M$ is flag-transitive nor that $G$ is isotropic yet. This will come up in \secref{sec:isometries} where we determine the isometry group of $X$.

\section{Systoles} \label{sec:systoles}

In this section, we determine and count the systoles in the surface $X$ constructed above.

\begin{prop} \label{prop:count}
Let $X$ be the surface constructed in \secref{sec:gluing}. The systoles in $X$ are the red curves and the blue curves. These systoles fill $X$ and there are $\frac{4q}{(q-2) p}(g-1)$ of them, where $q$ is the valence of the map $M$, $p$ is the girth of $M$ and the gluing graph $G$, and $g$ is the genus of $X$.
\end{prop}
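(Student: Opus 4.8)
The plan is to proceed in three stages: first identify which curves are systoles, then verify they fill, then count them via an Euler characteristic computation. For the first stage, I would start from the fact (Lemma~\ref{lem:systoles_in_block}) that inside each block $B_v$ the systoles are the boundary geodesics, of length $pL$. After gluing, these boundary curves survive as closed geodesics of length $pL$ in $X$ (the red curves), and by Lemma~\ref{lem:blue} the blue arcs assemble into closed geodesics of length $pL$ as well. So $X$ has closed geodesics of length $pL$, and the systole length of $X$ is at most $pL$. For the reverse inequality I would take an arbitrary closed geodesic $\gamma$ in $X$ and analyze how it sits with respect to the block decomposition: $\gamma$ is cut by the red curves (the $\partial B_v$) into arcs, each contained in some block $B_v$ and running from $\partial B_v$ to $\partial B_v$. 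If $\gamma$ lies entirely in one block, Lemma~\ref{lem:systoles_in_block} applies. Otherwise $\gamma$ crosses at least one red curve; each time it enters a block it either can be homotoped into the boundary (excluded since $\gamma$ is a geodesic not equal to a red curve) or, by Lemma~\ref{lem:arc_boundary_to_boundary}, contributes an arc of length at least $L$, with equality only for blue arcs. I would argue that $\gamma$ must cross the red curves at least $p$ times — using that the graph $G$ has girth $p$, so any nontrivial closed path in $G$ (which is essentially what the sequence of blocks traversed by $\gamma$ records) has length at least $p$ — hence $\ell(\gamma) \geq pL$, with equality forcing every arc to be blue, i.e. forcing $\gamma$ to be a blue curve (here one also needs Corollary~\ref{cor:arc_boundary_to_self} to rule out short return arcs that would let $\gamma$ close up in fewer than $p$ block-crossings while backtracking). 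This shows the systoles are exactly the red and blue curves.

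For the filling claim, I would cut $X$ along the union of all red and blue curves. Within a single block $B_v$, the red sides together with the blue arcs (the segments $M \cap P_v$ drawn in each tile, or equivalently the full system of blue arcs) cut each regular $2q$-gon $P_v$ into pieces that are right-angled polygons, in particular contractible; more precisely, the red curves are the boundary $\partial B_v$ and the blue curves decompose $B_v$ into simply connected regions (this is essentially the statement that the blue arcs, together with $\partial B$, fill the block, which follows from the construction of $B$ as polygons glued along $M$). Since the gluing of blocks is along red curves, which are part of the cutting system, the complementary regions in $X$ are exactly the complementary regions inside the individual blocks, hence contractible. So the red and blue curves fill $X$.

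For the count, I would compute the Euler characteristic of $X$ from the cell structure furnished by the red and blue curves, or more efficiently work block by block. Each block $B_v$ is built from $|V(M)|$ copies of the regular right-angled $2q$-gon $P$; let $V_M, E_M, F_M$ be the vertex, edge, face counts of $M$, so $F_M = d$ and $pF_M = q V_M$ (counting flags) and $2 E_M = q V_M$. One finds $\chi(B) = \chi(S) - F_M$ where $S$ is the surface carrying $M$, since $B$ is $S$ with a disk removed from each face; and $\chi(S) = V_M - E_M + F_M$. Then $X$ is obtained by gluing $|V(G)|$ copies of $B$ in pairs along $d|V(G)|/2$ boundary circles (each circle has $\chi = 0$), so $\chi(X) = |V(G)|\,\chi(B)$. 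Expressing everything in terms of $p$, $q$, $|V(M)|$, and $|V(G)|$ gives $2 - 2g = \chi(X)$, hence $g - 1$ in terms of these data. On the other hand, the number of red curves equals $d|V(G)|/2 = F_M |V(G)|/2$, and the number of blue curves equals (number of blue arcs per block)$\times |V(G)| / p$ since each blue closed geodesic passes through $p$ blocks; the number of blue arcs per block is $q V_M / 2$ (each tile has $q$ blue sides, each shared by two tiles... actually each blue arc lies in a two-tile union, so there are $E_M = q V_M/2$ of them). Adding and simplifying, and comparing with the formula for $g-1$, should yield exactly $\frac{4q}{(q-2)p}(g-1)$. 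The main obstacle I anticipate is the systole lower bound: making rigorous the claim that a closed geodesic crossing the block decomposition corresponds to a nontrivial closed walk in $G$ of combinatorial length equal to the number of red crossings, so that girth $p$ of $G$ forces at least $p$ crossings — one has to handle carefully the case where $\gamma$ re-enters the same block or backtracks, which is exactly where Corollary~\ref{cor:arc_boundary_to_self} (a nontrivial return arc in a block is longer than $pL/2$) does the work, together with the girth-$p$ property of $M$ controlling what happens inside a block.
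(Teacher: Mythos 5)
Your identification of the systoles and the filling argument follow essentially the same route as the paper: a systole either stays in one block (handled by Lemma~\ref{lem:systoles_in_block}) or traces a closed walk in $G$, where backtracks are killed by Corollary~\ref{cor:arc_boundary_to_self} (two return arcs each longer than $pL/2$ already exceed $pL$), so the walk is non-trivial, the girth of $G$ forces at least $p$ block-crossings, and Lemma~\ref{lem:arc_boundary_to_boundary} gives length at least $L$ per crossing with equality only for blue arcs; and the complement of the red and blue curves is exactly the disjoint union of the open tiles $P_v$ (note they do not further subdivide a tile --- they are its boundary), which gives filling. Where you genuinely diverge is the count: you propose an Euler characteristic bookkeeping in terms of $|V(M)|$, $|V(G)|$, $p$, $q$, using $pF_M = 2E_M = qV_M$, $\chi(B)=\chi(S)-F_M$ and $\chi(X)=|V(G)|\chi(B)$, whereas the paper tiles $X$ by rhombi of area $\pi(q-2)/q$ (one per red or blue arc, with a vertex at the center of each of the two adjacent tiles) and divides the total area $4\pi(g-1)$ by the rhombus area and by $p$. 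Both computations are correct and give $g-1 = |V(G)|\,V_M\,(q-2)/4$ and a total of $qV_M|V(G)|/p$ curves, matching $\frac{4q}{(q-2)p}(g-1)$; the paper's version is shorter and avoids tracking the combinatorial data of $M$ and $G$ explicitly, while yours makes the dependence on those parameters transparent. No gap.
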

\begin{proof}
Let $\gamma$ be a systole in $X$. If $\gamma$ is contained in a single block $B_v \subset X$, then $\gamma$ is a red curve (of length $pL$) by \lemref{lem:systoles_in_block}. Now assume that $\gamma$ is not contained in any block. Then the blocks $B_{v_1}, \ldots, B_{v_k}$ ($k\geq 2$) that it visits define a closed cycle $s=(v_1,\ldots v_k,v_1)$ in the graph $G$. First suppose that $s$ is trivial in $G$. Then $s$ contains at least two backtracks, that is, vertices $v_j$ in the sequence such that $v_{j-1}=v_{j+1}$. If $s$ backtracks at a vertex $u\in G$, this means that a subarc $\omega$ of $\gamma$ enters and leaves the block $B_u$ via the same boundary component. By \corref{cor:arc_boundary_to_self}, $\omega$ has length strictly larger than $pL/2$. Since there are at least two disjoint subarcs like this, $\gamma$ is longer than $pL$.  We conclude that $s$ is non-trivial in $G$, so that its length is at least $p$, the girth of $G$. But for each vertex $u$ along $s$, the corresponding subarc of $\gamma$ in $B_v$ has length at least $L$ by \lemref{lem:arc_boundary_to_boundary}. Thus the total length of $\gamma$ is at least $pL$. If equality occurs, then $\gamma$ is a concatenation of blue arcs. Conversely, any concatenation of blue arcs has length $pL$  by \lemref{lem:blue}.

The complementary components of the set of systoles in $X$ are precisely the interiors of the tiles from which the blocks are assembled. In particular, the systoles fill.

The number of systoles in $X$ is equal to the total number of red arcs and blue arcs divided by $p$. This is because the red arcs are joined in groups of $p$ to form systoles, and similarly for the blue arcs. Each such arc $\alpha$ (either red or blue) belongs to exactly two tiles. The rhombus with one vertex in the center of each of these two tiles and diagonal $\alpha$ has area $\pi(q-2)/q$ by the Gauss--Bonnet formula (it has two right angles and two angles $\pi/q$). These rhombi tile $X$, which has area $4\pi(g-1)$. Therefore, the number of systoles is $4\pi(g-1)$ divided by $\pi(q-2)/q$, divided by $p$.
\end{proof}

Recall that in the construction of $X$ we could take any $q\geq 3$ and $p=2^n$ for any $n\geq 1$. Given any $\eps>0$, taking $n$ sufficiently large and any $q\geq 3$ gives a surface with a filling set of at most $\eps g$ systoles. This proves \thmref{thm:fewsystoles}. At the other extreme, the largest number of systoles is obtained when $q=3$ and $p=2$, which gives $6g-6$ systoles. By \cite[Theorem 2.8]{SchmutzMaxima}, such a surface has too few systoles to be a local maximum of the systole function, but we will see later that it is nevertheless a critical point of lower index.

\begin{ex} \label{example}
For any $g\geq 2$, if we take the map $M$ to be the bipartite graph of valence $g+1$ on two vertices (as a map on the sphere), then the resulting block $B$ has $g+1$ boundary components. Taking the gluing graph $G$ to be equal to $M$, we obtain a surface $X$ which is the double of $B$ across its boundary. The genus of $X$ is then equal to $g$. Since $q=g+1$ and $p=2$, the number of systoles is $2g+2$ according to the formula in \propref{prop:count}. Removing any two intersecting systoles leaves a filling set of $2g$ systoles. This example was previously described in \cite[Theorem 36]{SchmutzMorse} and \cite[Section 5]{APP} and was the starting point of this paper.
\end{ex}

\begin{remark}
We could allow the graphs $G$ and $M$ to have different girths $p$ and $r$ by replacing the polygons $P$ in the blocks to be semi-regular with side lengths $L_\text{blue}$ and $L_\text{red}$ satisfying $p L_\text{blue} = r  L_\text{red}$. A version of \propref{prop:count} still holds for this generalization, with the count of systoles coming to \[\frac{2q}{(q-2)}\left(\frac{1}{p}+\frac{1}{r}\right)(g-1).\] 
All one has to do is change \lemref{lem:arc_in_poly} to say that the distance between any two blue sides is at least $L_\text{red}$ and the distance between any two red sides is at least $L_\text{blue}$, and modify the other lemmata accordingly.
\end{remark}

\section{Isometries} \label{sec:isometries}

In this section, we determine the isometry group of the surface $X$ up to index $2$. Recall that the blocks $B_v \subset X$ (where $v\in V(G)$) are tiled by regular right-angled $2q$-gons $P_u$ (where $u \in V(M)$). By connecting the center of each polygon $P_u$ to the midpoints of its edges with geodesics, we obtain a tiling $\calQ$ of $X$ by $(2,2,2,q)$-quadrilaterals (i.e., quadrilaterals with three right angles and one angle equal to $\pi/q$). Since any isometry of $X$ preserves the set of systoles, it permutes the complementary polygons $P_u$ and therefore the quadrilaterals in $\calQ$. In fact, any quadrilateral can be sent to any other by an isometry. 

\begin{prop} \label{prop:isom}
The isometry group of $X$ acts transitively on the quadrilaterals in the tiling $\calQ$.
\end{prop}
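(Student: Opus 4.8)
The plan is to build enough isometries of $X$ by hand to act transitively on the quadrilaterals of $\calQ$, exploiting the two sources of symmetry built into the construction: the flag-transitivity of the map $M$ inside each block, and the isotropy of the gluing graph $G$. First I would record the obvious local symmetries. Within a single tile $P_u$, reflections in the three geodesics joining the center of $P_u$ to the midpoints of two adjacent blue sides and to a red-side midpoint generate a group acting transitively on the four quadrilaterals of $\calQ$ meeting at the center of $P_u$; so it suffices to show the isometry group acts transitively on the tiles $P_u \subset X$. A tile $P_u$ is determined by a pair $(v,u)$ with $v \in V(G)$ and $u \in V(M)$, so I need two things: (i) for each fixed block $B_v$, the isometries of $X$ preserving $B_v$ act transitively on its tiles; and (ii) the isometries of $X$ act transitively on the blocks $B_v$.

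For (i), I would use flag-transitivity of $M$. A map-automorphism of $M$ is realized by a homeomorphism of the underlying surface $S$, and since the block $B$ is assembled out of copies of the \emph{same} regular right-angled $2q$-gon $P$ glued along $M$ with a fixed color convention, every map-automorphism of $M$ induces an isometry of $B$: it permutes the tiles $P_u$ according to its action on $V(M)$, carrying blue sides to blue sides compatibly with the cyclic edge-orderings. Because $M$ is flag-transitive, its automorphism group already acts transitively on flags of $M$, hence in particular transitively on vertices of $M$, giving transitivity on the tiles of $B$. The one subtlety is that such an isometry of $B_v$ need not extend to $X$: it permutes the $d$ boundary components $b_1,\dots,b_d$ of $B_v$ by some permutation $\rho \in S_d$, and to extend it across the gluings I must simultaneously permute the neighboring blocks. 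This is exactly where isotropy of $G$ enters: the edges at $v$ are colored bijectively by $\{1,\dots,d\}$, so $\rho$ is a bijection of $\mathrm{st}(v)$, and since $G$ is locally symmetric, $\rho$ extends to a graph automorphism of $G$ fixing $v$. One then checks that combining the block-isometry on $B_v$, the identity-compatible block-isometries dictated by the graph automorphism on the other blocks (using that all $B_w \cong B$ isometrically and the gluings are by the identity on matching boundary labels), one gets a genuine isometry of $X$; a small bookkeeping point is that the graph automorphism may swap the two colors of the bipartition $\sigma \circ \pi$, which corresponds to the orientation-reversal convention on blocks and is harmless.

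For (ii), transitivity on blocks, I would use vertex-transitivity of $G$ (part of isotropy): given $v, w \in V(G)$, pick a graph automorphism $\phi$ of $G$ with $\phi(v) = w$, and realize it by an isometry of $X$ that maps $B_v$ to $B_w$ via the standard isometric identification $B_v \cong B \cong B_w$ (adjusted by orientation according to $\sigma$), and more generally maps $B_x$ to $B_{\phi(x)}$. Here one must verify the gluing data is respected: an edge $e = \{x,y\}$ of color $j = \chi\circ\pi(e)$ is sent to $\phi(e) = \{\phi(x),\phi(y)\}$, and this edge again has color $j$ precisely when $\phi$ preserves the coloring $\chi \circ \pi$, which is automatic since $\phi$ is a lift of an automorphism of $\Theta$—or, more simply, because any automorphism of $G$ in the subgroup we use can be taken to cover an automorphism of $\Theta$ fixing each edge. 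Assembling (i) and (ii): the isometry group acts transitively on blocks, the stabilizer of a block acts transitively on its tiles, and the stabilizer of a tile acts transitively on the four quadrilaterals inside it, so $\isom(X)$ acts transitively on $\calQ$.

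The main obstacle is the extension problem in step (i): producing from a block-automorphism of $B_v$ an honest isometry of the closed surface $X$. The delicate part is checking that the permutation of boundary components induced on $B_v$, after being extended to a graph automorphism of $G$ via local symmetry, yields gluing maps that are still compatible with the identity-map gluings used to build $X$—i.e., that the isometries chosen block-by-block agree along every shared boundary geodesic. This requires care with the color conventions ($\sigma$ on vertices, $\chi$ on edges) and with how orientation-reversal on odd-$\sigma$ blocks interacts with orientation-reversing gluing maps, but no deep idea beyond the combinatorics already set up in Section~\ref{sec:gluing}.
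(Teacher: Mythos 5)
Your overall strategy is the same as the paper's: exploit flag-transitivity of $M$ for within-block symmetry, and local symmetry of $G$ to extend block isometries to $X$, with vertex-transitivity of $G$ supplying block-transitivity. The extension argument in your step (i) is exactly the content of the paper's proof, including the key point that the boundary permutation $\tau$ must be lifted through $\Theta$ to an automorphism of $G$ fixing $v$ so that gluings are respected.

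However, your opening reduction has a genuine gap. You assert that reflections of a tile $P_u$ in three radii ``generate a group acting transitively on the four quadrilaterals of $\calQ$ meeting at the center of $P_u$,'' reducing the problem to tile-transitivity. First, each tile $P_u$ is divided into $2q$ quadrilaterals of $\calQ$, not four (one per half-side, i.e.\ one per flag of $M$ at the vertex $u$). Second, and more seriously, these reflections of $P_u$ are abstract symmetries of the $2q$-gon, not automatically isometries of $X$ --- promoting them to isometries of $X$ requires precisely the extension machinery you develop later, so you cannot use them ``for free'' at the outset. Your step (i) then only extracts vertex-transitivity of $M$ from flag-transitivity, which gives transitivity on tiles but not on quadrilaterals within a tile, leaving the within-tile part unproved. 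The fix is simple and uses nothing you don't already have: flag-transitivity of $M$ gives that $\isom(B)$ acts transitively on the quadrilaterals of $B$ (quadrilaterals in $B$ correspond bijectively to flags of $M$, as the paper notes), so in particular the stabilizer of a tile acts transitively on the $2q$ quadrilaterals inside it; your step (i) extension argument then turns all of these into isometries of $X$. With that substitution your proof matches the paper's.
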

\begin{proof}
The hypothesis that $M$ is flag-transitive implies that the isometry group of $B$ acts transitively on its $(2,2,2,q)$-quadrilaterals. This is because there is a one-to-one correspondence between the flags in $M$ and the quadrilaterals in $B$. The correspondence works as follows. Recall that $M$ naturally embeds in $B$, connecting the centers of polygons $P$ to their blue sides. A flag in $M$ is the same as a half-edge $e$ together with a choice of a face $f$ containing $e$, either on the left or the right. In the tiling of $B$ by quadrilaterals, there are exactly two quadrilaterals that have $e$ as an edge. The side of $e$ on which $f$ lies determines which quadrilateral to pick. Since any map-automorphism of $M$ can be realized as an isometry of $B$ and $M$ is flag-transitive, the claim follows. 

Let $v \in V(G)$ and let $\phi: B_v \to B_v$ be an isometry. We claim that $\phi$ extends to an isometry $\Phi$ of $X$. First, the isometry $\phi$ induces a permutation $\tau$ on $\{1,\ldots,d\}$ such that $\phi$ sends the boundary component $b_i$ of $B_v$ to the component $b_{\tau(i)}$ for every $i$. Now the edges adjacent to $v$ in $G$ are colored with the numbers $\{1,\ldots,d\}$ according to the coloring $\chi\circ \pi$. Thus the permutation $\tau$ induces a bijection on the star of $v$. Since $G$ is locally symmetric, this bijection can be extended to an automorphism $\psi$ of $G$. If $x \in B_u \subset X$, then define $\Phi(x)$ to be the point $\phi(x)$ in $B_{\psi(u)}$, where we use the canonical identifications $B_w \cong B_v$ to transport the action of $\phi$ onto any block. This map is well-defined, for if $x\in B_u \cap B_v$ then $x$ belongs to the boundary component labelled $i=\chi\circ \pi(\{u,v\})$ of $B_u$ and $B_v$. By definition, $\phi(x)$ belongs to the $\tau(i)$-th boundary component of $B_v$. Now $B_{\psi(u)}$ and $B_{\psi(v)}$ are glued along their boundary component labelled $\chi\circ\pi(\psi(\{u,v\}))$. This number equals $\tau(i)$ provided that the automorphism $\psi$ is chosen to be a lift of the automorphism of $\Theta$ induced by $\tau$, and this is possible according to \cite[Theorem 3.2]{SeressSwartz}. The map $\Phi$ is an isometry since it is a locally isometry as well as a bijection.

Similarly, any automorphism $\psi$ of $G$ which preserves the coloring $\chi\circ \pi$ defines an isometry $\Psi$ of $X$ by sending $x\in B_v$ to the corresponding $x$ in $B_{\psi(v)}$. This simply shuffles the blocks around, acting by the identity map on the blocks. Note that the group of such automorphisms $\psi$ acts transitively on the vertices of $G$.

Combining these two types of isometries gives the desired result. In order to send a quadrilateral $Q\subset B_u$ to another quadrilateral $Q' \subset B_v$, first apply an isometry $\Psi$ as in the previous paragraph to send $B_u$ to $B_v$. Then move $\Psi(Q)$ to $Q'$ via an isometry $\Phi$ of the first type, preserving the block $B_v$.
\end{proof}

Since there are at most two isometries of $X$ sending one quadrilateral to another, this determines the isometry group of $X$ up to index $2$. We can reformulate this as follows. Subdivide $\calQ$ further into a tiling $\T$ by $(2,4,2q)$-triangles by bisecting the quadrilaterals at their smallest angle. Then the isometry group of $X$ may or may not act transitively on the tiles of $\T$ depending on the graphs $M$ and $G$ used to construct $X$.

 In \exref{example}, the isometry group of $X$ acts transitively on these triangles, but that is not the case in general. That is, there can be an asymmetry between the red and blue curves in $X$. For example, let $M$ be the flag-transitive map of type $\{4,4\}$ obtained by subdividing the square torus into a $5\times 5$ grid (so that $B$ is a torus with $25$ holes) and let $G$ be the $1$-skeleton of the $25$-dimensional cube. Then each component of $X \setminus \{\text{blue curves}\}$ is a torus with $16$ boundary components corresponding to a $4$-dimensional subcube of $G$, while the complementary components of the red curves are the blocks with $25$ boundary curves each. In this case, no isometry of $X$ can interchange the two families of systoles. 

\section{Critical point and index} \label{sec:critical}

A real-valued function $f$ on an $n$-dimensional manifold $M$ is a \emph{topological Morse function} if for every $p\in M$, there is an open neighborhood $U$ of $p$ and an injective continuous map $\phi : U \to \R^n$ with $\phi(p)=0$ such that $f\circ \phi^{-1}-f(p)$ takes either the form
\[(x_1,\ldots,x_n) \mapsto x_1 \]
or
\[(x_1,\ldots,x_n) \mapsto - \sum_{i=1}^j x_i^2 + \sum_{i=j+1}^n x_i^2 \]  
for some $j \in \{0,\ldots,n\}$. In the first case, $p$ is an \emph{ordinary point} and in the second case $p$ is \emph{a critical point of index $j$}. Critical points of index $0$ and $n$ are local minima and maxima respectively.

Let $g\geq 2$ and let $\T_g$ be the Teichm\"uller space of marked, connected, oriented, closed, hyperbolic surfaces of genus $g$. This space is a smooth manifold diffeomorphic to $\R^{6g-6}$. The systole $\sys(Y)$ of a surface $Y\in \T_g$ is the length of any of its shortest closed geodesics. As mentionned in the introduction, Akrout \cite{Akrout} proved that $\sys:\T_g \to \R_+$ is a topological Morse function. 

Let $Y\in \T_g$ and let $\calS$ be the set of (homotopy classes of) systoles in $Y$. For each $\alpha \in \calS$ and $Z \in \T_g$, we let $\ell_\alpha(Z)$ be the length of the unique closed geodesic homotopic to $\alpha$ in $Z$. These functions are differentiable on $\T_g$ and we denote their differentials by $d\ell_\alpha$. 

\begin{defn}
The point $Y\in \T_g$ is \emph{eutactic} if for every $v \in T_Y \T_g$, the following implication holds: if $d\ell_\alpha(v) \geq 0$ for every $\alpha \in \calS$, then $d\ell_\alpha(v)=0$ for every $\alpha \in \calS$. The \emph{rank} of a eutactic point $Y$ is the dimension of the image of the linear map ($d\ell_\alpha)_{\alpha\in\calS}: T_Y \T_g \to \R^\calS$.
\end{defn}

With these definitions, we have the following characterization of the cri\-ti\-cal points of $\sys$ \cite[Theorem 1]{Akrout}. 

\begin{thm}[Akrout]
The critical points of index $j$ of the systole function are the eutactic points of rank $j$.
\end{thm}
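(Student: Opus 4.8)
The plan is to reduce the statement to a local analysis of a minimum of finitely many smooth functions near a fixed point $Y\in\T_g$. First I would note that the statement is local, so fix $Y$ with systole set $\calS$; since the length spectrum of a hyperbolic surface is proper, only finitely many closed geodesics have length below $\sys(Y)+\delta$ for some fixed $\delta>0$, and by continuity of the length functions there is a neighbourhood of $Y$ on which $\sys$ agrees with $f:=\min_{\alpha\in\calS}\ell_\alpha$. I would then pass to Weil--Petersson geodesic normal coordinates centred at $Y$, identifying a neighbourhood with a ball about the origin in $\R^{n}$, $n=6g-6$. The one analytic ingredient I would invoke is Wolpert's theorem on the strict convexity of geodesic-length functions along Weil--Petersson geodesics: in these coordinates it says that each Hessian matrix $\nabla^{2}\ell_\alpha(0)$ is positive definite, and that $\ell_\alpha$ is monotone along any short radial segment on which its derivative keeps a constant sign. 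Finally I would fix a splitting $T_Y\T_g=V\oplus K$ with $K=\bigcap_{\alpha\in\calS}\ker d\ell_\alpha$ and $V$ an arbitrary complement, writing a tangent vector as $x=x_V+x_K$; note that $\dim V$ is exactly the rank of $Y$, namely $\dim\mathrm{span}\{d\ell_\alpha(Y)\}$, and that ``$Y$ is eutactic'' is the same as the convex-geometric condition $0\in\mathrm{relint}\,\mathrm{conv}\{d\ell_\alpha(Y)\}$.

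For the implication ``$Y$ not eutactic $\Rightarrow$ $Y$ ordinary'' I would pick $v$ with $d\ell_\alpha(v)\ge 0$ for all $\alpha\in\calS$ and $d\ell_\beta(v)>0$ for at least one $\beta$ (such a $v$ exists precisely because $Y$ fails the eutacticity implication) and study $f$ along the radial ray $\gamma(t)$ in direction $v$. Those $\ell_\alpha$ with $d\ell_\alpha(v)>0$ increase strictly near $t=0$; those with $d\ell_\alpha(v)=0$ satisfy $(\ell_\alpha\circ\gamma)(t)=\ell_\alpha(Y)+\tfrac12 t^{2}\,\nabla^{2}\ell_\alpha(0)(v,v)+o(t^{2})$ and so increase for small $t>0$ as well. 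Hence $f(\gamma(t))>f(Y)$ for small $t>0$, while for small $t<0$ the geodesics with $d\ell_\alpha(v)>0$ have already dropped below their value at $Y$, so $f(\gamma(t))<f(Y)$; thus $Y$ sits transversally on the common boundary of $\{f<f(Y)\}$ and $\{f>f(Y)\}$. In the cleanest case $0\notin\mathrm{conv}\{d\ell_\alpha(Y)\}$, strict separation yields $v$ with $d\ell_\alpha(v)>0$ for \emph{all} $\alpha$, so $f$ is strictly increasing along the constant flow of $v$ and $Y$ is visibly ordinary; in general I expect the same picture to show that $\{f=f(Y)\}$ is locally a tame hyperplane across which $f$ changes sign, hence $Y$ is ordinary, once the topological lemma below is in place.

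For ``$Y$ eutactic of rank $j$ $\Rightarrow$ $Y$ critical of index $j$'' I would show that $f$ looks near $Y$ like the model $g(x)=-|x_V|+|x_K|^{2}$ on $\R^{j}\times\R^{n-j}$. On $V$: for every nonzero $w\in V$, since $w\notin K$ some $d\ell_\alpha(w)$ is nonzero, and the eutacticity implication then forces some $d\ell_\alpha(w)<0$; hence $f(\gamma_w(t))<f(Y)$ for small $t>0$, and by continuity of $w\mapsto\min_\alpha d\ell_\alpha(w)$ on the unit sphere of $V$ together with finiteness of $\calS$ one gets $c,r>0$ with $f(x)\le f(Y)-c|x_V|$ for $x\in V$ with $|x|<r$. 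On $K$: every $d\ell_\alpha$ vanishes on $K$, so each $\ell_\alpha$ grows quadratically there with positive-definite Hessian, giving $f(x_K)\ge f(Y)+c'|x_K|^{2}$ for small $|x_K|$ and some $c'>0$. Expanding $\ell_\alpha(x_V+x_K)$ and absorbing the cross term $O(|x_V|\,|x_K|)$ into $\tfrac{c}{4}|x_V|+C|x_K|^{2}$ after shrinking the ball, I would get two-sided bounds pinning $f-f(Y)$ between constant multiples of $-|x_V|+|x_K|^{2}$. Since $w\mapsto|w|\,w$ is a homeomorphism of $\R^{j}$ fixing the origin that conjugates $w\mapsto-|w|$ to $w\mapsto-|w|^{2}$, the model $g$ is topologically conjugate to $-|x_V|^{2}+|x_K|^{2}$, the index-$j$ Morse normal form; so $Y$ is a critical point of index $j=\mathrm{rank}(Y)$.

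The hard part will be the topological normal-form lemma underlying both cases: a minimum of finitely many $C^{2}$ functions with positive-definite Hessians at a common point, satisfying one-sided estimates like those above, is topologically conjugate near that point to a coordinate function (non-eutactic case) or to $-|x_V|^{2}+|x_K|^{2}$ (eutactic case). The subtlety is that the sandwiching inequalities do not by themselves produce a conjugating homeomorphism: one must check that the nearby level sets of $f$ are tame spheres, respectively that $\{f=f(Y)\}$ is a tame hyperplane, and then build the homeomorphism by an explicit radial rescaling along rays from $Y$ --- this is precisely the technical core carried out in \cite{Akrout}. By contrast, the convex-geometry bookkeeping (the equivalence of the stated definition of eutacticity with the condition $0\in\mathrm{relint}\,\mathrm{conv}\{d\ell_\alpha(Y)\}$, and the identification of the rank with $\dim\mathrm{span}\{d\ell_\alpha(Y)\}$) is routine.
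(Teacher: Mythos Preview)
The paper does not prove this theorem. It is quoted as \cite[Theorem 1]{Akrout} and used as a black box in the proposition that follows; there is no proof in the paper to compare your attempt against.

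For what it is worth, your outline tracks Akrout's actual argument fairly closely: he works in the general setting of a local minimum of finitely many smooth functions with positive-definite Hessians at a common point (his ``generalized systole functions''), uses the eutactic condition in its convex-hull formulation, derives the linear-decay-on-$V$ versus quadratic-growth-on-$K$ estimates you describe, and then establishes the topological normal-form lemma you correctly flag as the technical core. So your sketch is a reasonable summary of \cite{Akrout}, with the caveat that the last step --- actually building the conjugating homeomorphism from the sandwiching inequalities --- is where essentially all the work lies, as you acknowledge.
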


We can now show that the surface $X$ constructed in \secref{sec:gluing} is a critical point of $\sys$ and give an upper bound for its index.

\begin{prop}
Let $X$ be as in \secref{sec:gluing}. Then $X$ is a critical point of index at most $\frac{4q}{(q-2) p}(g-1)$ for the systole function.
\end{prop}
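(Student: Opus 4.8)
The plan is to invoke Akrout's theorem. Since the critical points of index $j$ are exactly the eutactic points of rank $j$, and the rank of a eutactic point is by definition the dimension of the image of the linear map $(d\ell_\alpha)_{\alpha\in\calS}\colon T_X\T_g\to\R^\calS$, the index is automatically at most $|\calS|=\frac{4q}{(q-2)p}(g-1)$ by \propref{prop:count}. So the entire content is to show that $X$ is eutactic. For this I would use the classical reformulation (a Farkas/Stiemke-type lemma applied to the cone $\{v:d\ell_\alpha(v)\ge 0\ \forall\alpha\in\calS\}$ and the subspace $\bigcap_{\alpha}\ker d\ell_\alpha$): the point $X$ is eutactic if and only if there exist strictly positive coefficients $c_\alpha$, $\alpha\in\calS$, with $\sum_{\alpha\in\calS}c_\alpha\,d\ell_\alpha=0$ in $T^*_X\T_g$.

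To produce such coefficients I would exploit the symmetry. Put $G=\isom(X)$. By \propref{prop:isom}, $G$ acts transitively on the quadrilaterals of $\calQ$, and since at most two isometries carry one quadrilateral onto another, $X/G$ is the quotient of a single $(2,2,2,q)$-quadrilateral by a group of order at most $2$; hence $X/G$ is isometric either to that Lambert quadrilateral or to a $(2,4,2q)$-triangle. The fixed locus $\T_g^{G}$ is canonically the Teichm\"uller space of the orbifold $X/G$, so it is a submanifold of dimension $1$ or $0$ accordingly, and consequently every $G$-invariant covector in $T_X^*\T_g$ vanishes on the $G$-invariant complement of $T_X\T_g^G$, i.e. is determined by its restriction to that line (or is $0$). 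Averaging over $G$, we may look for coefficients $c_\alpha$ that are constant on $G$-orbits. The orbits of $G$ on $\calS$ are the set of red curves and the set of blue curves, and these form a single orbit precisely when some isometry interchanges the two colours — in which case one checks that $\dim\T_g^G=0$.

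If $\dim\T_g^G=0$ (in particular the colour-symmetric case), then $D:=\sum_{\alpha\in\calS}d\ell_\alpha$ is a $G$-invariant covector supported on $T_X\T_g^G=0$, hence $D=0$, and $c_\alpha\equiv 1$ works. If $\dim\T_g^G=1$, then $G$ has exactly the two orbits of red and of blue curves and acts transitively on each; write $D_{\mathrm{red}}=\sum_{\text{red }\alpha}d\ell_\alpha$ and $D_{\mathrm{blue}}=\sum_{\text{blue }\alpha}d\ell_\alpha$, both $G$-invariant, hence both proportional to a generator $\eta$ of the line $(T_X\T_g^G)^*$. Now $T_X\T_g^G$ is spanned by the infinitesimal deformation $w$ that turns the regular right-angled $2q$-gon into the semi-regular one with alternating side lengths $L_{\mathrm{red}},L_{\mathrm{blue}}$ (as in the Remark after \propref{prop:count}); since every red systole has length $pL_{\mathrm{red}}$, every blue systole length $pL_{\mathrm{blue}}$, and $G$ is transitive on each colour class, $D_{\mathrm{red}}(w)$ and $D_{\mathrm{blue}}(w)$ are positive multiples of $dL_{\mathrm{red}}(w)$ and $dL_{\mathrm{blue}}(w)$. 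The key computation is to show that these two derivatives are nonzero of opposite sign, i.e. that along this one-parameter family lengthening the red sides strictly shortens the blue ones; this follows from the trigonometry of right-angled $2q$-gons (equivalently of the $(2,2,2,q)$-Lambert quadrilateral obtained by cutting along a half-diagonal). Granting it, $D_{\mathrm{red}}$ and $D_{\mathrm{blue}}$ are nonzero multiples of $\eta$ of opposite sign, so $c_{\mathrm{red}}D_{\mathrm{red}}+c_{\mathrm{blue}}D_{\mathrm{blue}}=0$ for suitable $c_{\mathrm{red}},c_{\mathrm{blue}}>0$; assigning $c_\alpha=c_{\mathrm{red}}$ or $c_{\mathrm{blue}}$ according to the colour of $\alpha$ gives the desired positive relation, so $X$ is eutactic.

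The main obstacle is this last sign computation in the quadrilateral case: one must check that the unique symmetric deformation trades red length for blue length monotonically (the reader may find a clean way to see this from the area of the $2q$-gon being a constant determined by Gauss--Bonnet). Everything else is formal: Akrout's theorem, the Farkas-type reformulation of eutaxy, and the identification of $T_X\T_g^G$ with the tangent space to the Teichm\"uller space of $X/G$ — standard for finite group actions, and the reason $G$-invariant covectors are controlled by that line. One minor point to handle carefully is that, when $\dim\T_g^G=1$, the group $G$ genuinely acts transitively on the red curves and on the blue curves: this uses transitivity of $G$ on $\calQ$ together with the fact that in this case no isometry swaps the two colours, so an isometry carrying one quadrilateral to another must match up their red half-edges.
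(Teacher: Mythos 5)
Your proposal is correct and rests on exactly the same two ingredients as the paper's proof: averaging over $\isom(X)$ together with \propref{prop:isom} to reduce to the $0$- or $1$-dimensional invariant subspace $T_X\T_g^{\isom(X)}$, and the Lambert-quadrilateral relation $\sinh a\sinh b=\cos(\pi/q)$ to get the opposite-sign statement for red versus blue lengths. The only difference is presentational: the paper works directly with the definition of eutaxy (take $v$ with $d\ell_\alpha(v)\ge0$, average to $w$, show $d\ell_\alpha(w)=0$), whereas you dualize via Stiemke's lemma and produce the positive coefficients $c_\alpha$; both are formal repackagings of the same symmetry reduction and the same sign computation.
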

\begin{proof}
Let $\calS$ be the set of systoles of $X$ (the red curves and the blue curves). Suppose that $v\in T_X \T_g$ is such that $d\ell_\alpha(v) \geq 0$ for every $\alpha \in \calS$ and let 
\[
w = \sum_{f\in \isom(X)} f_*v.
\]
Then
\begin{equation} \label{eq:eutactic}
d\ell_\alpha(w) = \sum_{f\in \isom(X)} d\ell_\alpha(f_* v) = \sum_{f\in \isom(X)} d\ell_{f(\alpha)}(v) \geq d\ell_{\alpha}(v) \geq 0
\end{equation}
for every $\alpha \in \calS$. On the other hand, $w$ is the lift to $X$ of a deformation of the quotient orbifold $Q = X /\isom(X)$. By \propref{prop:isom}, $Q$ is either a $(2,4,2q)$-triangle or a $(2,2,2,q)$-quadrilateral. If $Q$ is a triangle, then $w=0$ so that $d\ell_\alpha(w)$ and $d\ell_{\alpha}(v)$ are both zero by \eqnref{eq:eutactic}, for every $\alpha \in \calS$. If $Q$ is a quadrilateral, then its deformation space is $1$-dimensional. This is because any $(2,2,2,q)$-quadrilateral is determined by the lengths $a$ and $b$ of the two sides disjoint from the angle $\pi/q$, which satisfy the relation
\[
\sinh a \sinh b = \cos(\pi/q)
\]
(see \cite[p.454]{Buser}). This equation implies that the lengths of the red curves and the blue curves in $\calS$ have opposite derivatives in the direction of $w$. Since the derivatives are non-negative, they must all be zero. We conclude that $d\ell_{\alpha}(v) = 0$ for every $\alpha \in \calS$ from \eqnref{eq:eutactic}. This shows that $X$ is eutactic. The number of systoles in $X$ is a trivial upper bound for the rank of $X$, and this number is equal to  $\frac{4q}{(q-2) p}(g-1)$ by \propref{prop:count}.
\end{proof}

Once again, by taking $p$ sufficiently large we obtain critical points of index at most $\eps g$ for any $\eps>0$, thereby proving \thmref{thm:critical}. This disproves the possibility envisaged by Schmutz Schaller \cite[p.410]{SchmutzMorse} that the minimal index were $2g-1$.

\section{Deformations preserving the systoles} \label{sec:dimension}
 
Let $\X_g$ be the subset of $\T_g$ whose systoles fill. We would like to show that $\X_g$ has relatively small codimension in $\T_g$. By \propref{prop:count}, the systoles of any surface $X$ constructed in \secref{sec:gluing} fill (recall that $X$ depends on several parameters). Let $\calS$ be the set of systoles in $X$. If we deform $X$ in such a way that the curves in $\calS$ remain of equal length, then these curves will still be the systoles for sufficiently small deformations. This is because the second shortest curve on $X$ is longer by a definite amount and length varies continuously.   

In other words, the intersection between the inverse image of the diagonal $\Delta \subset \R^\calS$ by the map $(\ell_\alpha)_{\alpha\in\calS}:\T_g\to\R^\calS$ and a small neighborhood of $X$ is contained in $\X_g$. One might be tempted to conclude directly that $\X_g$ has codimension at most $|\calS|-1$ in $\T_g$. The subtlety is that the image of $(\ell_\alpha)_{\alpha\in\calS}$ is not necessarily transverse to $\Delta$. Indeed, the rank of $X$ can be strictly less than $|\calS|-1$. For instance, the surface in \exref{example} has rank $2g-1$ according to \cite[Theorem 36]{SchmutzMorse}, while $|\calS|=2g+2$. 

To remedy this, one could try to get rid of redundant equations, i.e., to find a filling subset of curves $\calR \subset \calS$ for which the differential $(d_X \ell_\alpha)_{\alpha\in\calR}$ is surjective and apply the implicit function theorem. The problem is that even if the curves in $\calR$ stay of equal length, the curves in $\calS \setminus \calR$ might become shorter and so the systoles might not fill anymore.

Another approach would be to find a nearby surface $X_\theta$ which has the same set of systoles as $X$, and hope that the differential $(d_{X_\theta} \ell_\alpha)_{\alpha\in\calS}$ has full rank there. Below we will describe a $1$-dimensional family of deformations of $X$ with the same systoles. This fixes the issue of rank in some (but not all) cases. A similar idea was used in \cite{Sanki} to find a path of surfaces in $\X_g$ with $2g$ systoles.

\subsection*{A $1$-dimensional deformation}

Recall that $X$ is assembled from right-angled regular $2q$-gons $P$ whose sides are colored alternatingly red and blue, where $q \geq 3$. Given any $\theta \in (0,\pi)$, there exists a unique polygon $P_{\theta}$ with $2q$ equal sides and interior angles alternating between $\theta$ and $\pi - \theta$ (start with a triangle with angles $\pi/q$, $\theta /2$ and $(\pi - \theta)/2$ and reflect repeatedly across the two sides at angle $\pi/q$). To fix ideas, let us say that $\theta$ is the counter-clockwise (interior) angle from a red side to a blue side when going clockwise around $P_{\theta}$ and $\pi - \theta$ is the angle from blue to red. Now replace all the polygons $P$ in $X$ by $P_{\theta}$ while keeping the same gluing combinatorics. By construction, the total angle around vertices of the resulting tiling is $2\pi$ so the deformed surface $X_{\theta}$ is still a closed hyperbolic surface. Moreover, the red sides still line up to form closed geodesics and similarly for the blue sides. These closed geodesics all have equal length, namely, $p$ times the side length of $P_{\theta}$. As long as $\theta$ is close enough to $\pi/2$, these curves will remain the systoles. 

The goal is then to show that the linear map $(d_{X_\theta} \ell_\alpha)_{\alpha\in\calS}$ has full rank when $\theta \neq \pi/2$. We can do this for some small examples (see below), but we do not know how to handle surfaces with complicated gluing graphs of large girth. We present examples with full rank for girth $2$ and $3$ below.

\subsection*{Computing the rank}

To prove that the derivative of lengths has full rank, it suffices to find a set of tangent vectors $\{ v_\beta\}_{\beta \in \calS}$ to Teichm\"uller space for which the square matrix $(d_{X_\theta} \ell_\alpha(v_\beta))_{\alpha,\beta\in\calS}$ has non-zero determinant. For this, we can choose each vector $v_\beta$ to be the Fenchel-Nielsen twist deformation (i.e., left earthquake) around the curve $\beta$. The cosine formula of Wolpert \cite{WolpertTwist} and Kerckhoff \cite{Kerckhoff} then says that
\[
d\ell_\alpha(v_\beta) = \sum_{p \in \alpha \cap \beta} \cos \angle_ p(\alpha,\beta)
\]
whenever $\alpha$ and $\beta$ are transverse, where $\angle_ p(\alpha,\beta)$ is the counter-clockwise angle from $\alpha$ to $\beta$ at the point $p$. In our case, two distinct curves $\alpha, \beta \in \calS$ intersect at most once, with angle $\theta$ from red to blue or $\pi - \theta$ from blue to red. If we split the rows and columns of $D=(d_{X_\theta} \ell_\alpha(v_\beta))_{\alpha,\beta\in\calS}$ by color we get a block matrix of the form
\[
D = \cos \theta \begin{pmatrix}
0 & A \\
-A^\intercal & 0
\end{pmatrix}
\]
where $A$ is the matrix of zeros and ones recording which red curves intersect which blue curves. If $\theta \neq \pi / 2$, then $D$ has full rank if and only if the matrix
\[
\wtilde D = \begin{pmatrix}
0 & A \\
A^\intercal & 0
\end{pmatrix}
\]
does. This matrix is the adjacency matrix of some graph $I_\calS$, namely, the graph whose vertices are the systoles of $X$ and where two vertices are joined by an edge if and only if the corresponding systoles intersect.

The determinant of the adjacency matrix of a graph counts something combinatorial on the graph. Indeed, according to \cite{Harary} we have 
\[
\det(\wtilde D) = \sum_{J \subset I_\calS} (-1)^{\#\{\text{even components of }J\}} 2^{\#\{\text{cycles in }J\}}
\]
where the sum is over all spanning subgraphs of $J\subset I_\calS$ (subgraphs containing all vertices) which are \emph{elementary}, meaning that their components are either edges or embedded cycles. The even components are those with an even number of vertices. In our case, $I_\calS$ is bipartite so that all its cycles are even. 

We are now ready to give some examples where $\wtilde D$ is invertible. 

\subsection*{Examples of girth 2}

The first family of examples comes from \exref{example}. In that example, the red and blue curves form a chain, that is, $I_\calS$ is a cycle of length $2g+2$. It follows that $I_\calS$ has exactly three elementary spanning subgraphs: $I_\calS$ itself, and two subgraphs obtained by deleting every other edge in $I_\calS$. If $g = 2m$ is even, then $I_\calS$ has $4m+2$ edges and
\begin{equation} \label{eq:det}
\det(\wtilde D) = (-1)^1 2^1 + (-1)^{2m+1} 2^0 + (-1)^{2m+1} 2^0 = -4 \neq 0.
\end{equation}
Alternatively, one could compute the determinant of $\wtilde D$ by using the fact that $A$ is a circulant matrix in this case.

The fact that $\wtilde D$ has non-zero determinant implies that the derivative of $\ell=(\ell_\alpha)_{\alpha\in\calS}:\T_g\to\R^\calS$ has full rank at $X_\theta$ whenever $\theta \neq \pi /2$. By the implicit function theorem, near $X_\theta$ we have that $\ell^{-1}(\Delta)$ is a smooth submanifold of codimension \[|\calS|-1=(2g+2)-1 = 2g+1,\] hence of dimension $4g-7$. As explained earlier, $\ell^{-1}(\Delta)$ intersected with a sufficiently small ball around $X_\theta$ is contained in $\X_g$. We have thus proved that $\X_g$ has dimension at least $4g-7$ when $g$ is even. We can push the proof a little further to obtain the following.

\begin{thm} \label{thm:dimension}
For every even $g \geq 2$, the set $\X_g \subset \T_g$ of closed hyperbolic surfaces of genus $g$ whose systoles fill contains a cell of dimension $4g-5$. 
\end{thm}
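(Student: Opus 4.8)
The plan is to push the bound $4g-7$ up by two by relaxing the requirement that \emph{all} systoles stay of equal length: we ask only a filling \emph{subset} of them to remain equal, and we control the two leftover curves so that they do not become shorter (which would destroy the filling property). Throughout, $g$ is even and $X$, $\calS$ (with $|\calS|=2g+2$), and the family $X_\theta$ are as in the discussion above; recall that for $\theta$ near $\pi/2$ the curves in $\calS$ are precisely the systoles of $X_\theta$, and that \eqnref{eq:det} shows $\det D\neq 0$, where $D=(d_{X_\theta}\ell_\alpha(v_\beta))_{\alpha,\beta\in\calS}$ and $v_\beta$ is the twist about $\beta$. I would first fix $\theta\neq\pi/2$ close to $\pi/2$ and record the consequence of $\det D\neq 0$ that matters: the $2g+2$ differentials $d_{X_\theta}\ell_\alpha$, $\alpha\in\calS$, are linearly independent in $T^*_{X_\theta}\T_g$. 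Note one must use $X_\theta$ rather than the symmetric surface $X=X_{\pi/2}$, since at $X$ the twist matrix vanishes and the rank drops.

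Next, choose two systoles $\alpha_0,\beta_0\in\calS$ that intersect and set $\calR=\calS\setminus\{\alpha_0,\beta_0\}$, a filling set of $2g$ curves by \exref{example}. Fixing some $\gamma\in\calR$, the $2g-1$ covectors $d_{X_\theta}(\ell_\alpha-\ell_\gamma)$ with $\alpha\in\calR\setminus\{\gamma\}$ are linearly independent (they produce a linear relation among a linearly independent subfamily of the $d_{X_\theta}\ell_\alpha$), so by the implicit function theorem the set
\[
N=\{\,Y\in\T_g:\ell_\alpha(Y)=\ell_\gamma(Y)\text{ for all }\alpha\in\calR\,\}
\]
is, near $X_\theta$, a smooth submanifold of $\T_g$ of dimension $(6g-6)-(2g-1)=4g-5$. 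On $N$ the curves of $\calR$ share a common length $m(Y)=\ell_\gamma(Y)$, and $X_\theta\in N$ with $\ell_{\alpha_0}(X_\theta)=\ell_{\beta_0}(X_\theta)=m(X_\theta)$.

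The crux is to find points of $N$ arbitrarily close to $X_\theta$ where $\ell_{\alpha_0}$ and $\ell_{\beta_0}$ are both \emph{strictly} greater than $m$. I would do this by showing that $F=(\ell_{\alpha_0}-\ell_\gamma,\ \ell_{\beta_0}-\ell_\gamma):N\to\R^2$, which vanishes at $X_\theta$, is a submersion there. Indeed, if it were not, some nonzero $(\mu,\nu)$ would make $\mu\,d_{X_\theta}\ell_{\alpha_0}+\nu\,d_{X_\theta}\ell_{\beta_0}$ lie in the span of $\{d_{X_\theta}\ell_\alpha:\alpha\in\calR\}$, contradicting the linear independence of the full family $\{d_{X_\theta}\ell_\alpha:\alpha\in\calS\}$ (as $\alpha_0,\beta_0\notin\calR$). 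Hence $F$ carries a neighborhood of $X_\theta$ in $N$ onto a neighborhood of the origin in $\R^2$, so $U:=N\cap F^{-1}(\{x>0,\,y>0\})$ is a nonempty open subset of $N$ that can be taken as small as we wish around $X_\theta$.

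It remains to check that $U\subset\X_g$ after shrinking it. Since the systoles of $X_\theta$ are exactly the curves in $\calS$, the usual local control of the length spectrum (only finitely many closed geodesics below a given length, lengths varying continuously) provides a neighborhood of $X_\theta$ on which every closed geodesic not in $\calS$ is strictly longer than every curve in $\calS$. For $Y$ in $U$ inside this neighborhood, the curves of $\calR$ have length $m(Y)$, the curves $\alpha_0,\beta_0$ have length $>m(Y)$, and all remaining geodesics are longer still, so the systoles of $Y$ are precisely the filling set $\calR$; hence $Y\in\X_g$. As $U$ is a nonempty open subset of a $(4g-5)$-dimensional manifold, it contains a cell of dimension $4g-5$, proving the theorem. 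I expect the only genuinely delicate point to be this last reduction: the filling locus $\X_g$ is not cut out by equations near $X_\theta$ because $\alpha_0$ and $\beta_0$ sit at the borderline length, so the implicit function theorem does not apply directly; the payoff of discarding exactly \emph{two} redundant curves is that the obstruction becomes the question of whether a submersion onto $\R^2$ meets the open first quadrant, which it does.
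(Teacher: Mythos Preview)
Your proof is correct and follows the same overall strategy as the paper: start from $X_\theta$ with $\theta\neq\pi/2$, remove two intersecting systoles $\alpha_0,\beta_0$ to obtain a filling subset $\calR$ of size $2g$, and use the full rank of $(d_{X_\theta}\ell_\alpha)_{\alpha\in\calS}$ (from \eqnref{eq:det}) to produce a $(4g-5)$-dimensional cell in $\X_g$.

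The organization differs slightly, and your version is somewhat more economical. The paper first uses openness of $\ell=(\ell_\alpha)_{\alpha\in\calS}$ to jump to a nearby surface $Y$ whose systoles are exactly $\calR$, and then applies the implicit function theorem at $Y$; to justify the rank condition at $Y$ it invokes a separate computation showing that the $\calR$-twist matrix at $X_\theta$ is (up to $\cos\theta$) the adjacency matrix of a path on $2g$ vertices, whose determinant counts perfect matchings and equals $\pm1$. You instead apply the implicit function theorem directly at $X_\theta$ to build the submanifold $N$, and then observe that the two leftover differentials remain independent on $T_{X_\theta}N$, so $F=(\ell_{\alpha_0}-\ell_\gamma,\ell_{\beta_0}-\ell_\gamma)$ is a submersion hitting the open first quadrant. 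This bypasses both the limiting argument $Y_n\to X_\theta$ and the tree-determinant computation, extracting everything from the single fact that the $2g+2$ covectors $d_{X_\theta}\ell_\alpha$ are linearly independent. The paper's route has the minor expository advantage of exhibiting an explicit center $Y$ for the cell at which the systole set is already $\calR$; your route has the advantage of fewer moving parts.
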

\begin{proof}
Let $X$ be the surface of genus $g$ from \exref{example} and let \[\calS = \{\alpha_1, \ldots, \alpha_{2g+2}\}\] be its set of systoles labelled in such a way that $\alpha_j$ intersects $\alpha_{j-1}$ and $\alpha_{j+1}$ for every $j$, where the indices are taken modulo $2g+2$. Let $X_\theta$ be the deformation of $X$ described above, where $\theta$ is close enough to $\pi/2$ so that its sets of systoles is still equal to $\calS$. By Equation \eqref{eq:det}, the map $\ell=(\ell_\alpha)_{\alpha\in\calS}:\T_g\to\R^\calS$ is a submersion at the point $X_\theta$. In particular, $\ell$ is open in a neighborhood of $X_\theta$. This implies that there exist surfaces $Y$ arbitrarily close to $X_\theta$ such that
\[
\ell_{\alpha_1}(Y) = \cdots = \ell_{\alpha_{2g}}(Y)
\]
and such that these lengths are strictly less than $\ell_{\alpha_{2g+1}}(Y)$ and $\ell_{\alpha_{2g+2}}(Y)$. If $Y$ is close enough to $X_\theta$, then its set of systoles is a subset of $\calS$ by continuity of the length functions. Therefore, there is a sequence $Y_n$ converging to $X_\theta$ such that the systoles in $Y_n$ are given by the set $\calR = \{ \alpha_1 , \ldots, \alpha_{2g}\}$. 

If $n$ is large enough, then the square matrix $(d_{Y_n}\ell_\alpha(v_\beta))_{\alpha,\beta \in \calR}$ will have non-zero determinant. Indeed, in the limit the matrix has the form 
\[
(d_{X_\theta}\ell_\alpha(v_\beta))_{\alpha,\beta \in \calR} = \cos \theta \begin{pmatrix}
0 & B \\
-B^\intercal & 0
\end{pmatrix}
\]
which is invertible because $\begin{pmatrix}
0 & B \\
B^\intercal & 0
\end{pmatrix}$ is the adjacency matrix of a tree $I_\calR$ with an even number of vertices. Up to sign, its determinant is the number of perfect matchings (spanning subgraphs whose components are edges) in $I_\calR$, which is equal to one. Furthermore, the entries of $(d_{Y}\ell_\alpha(v_\beta))_{\alpha,\beta \in \calR}$ depend continuously on the surface $Y$ in the same way that the angles of intersection between geodesics do.

Let $Y=Y_n$ for any such large enough $n$. Then the systoles in $Y$ are given by the set $\calR$ and the map $(\ell_\alpha)_{\alpha\in\calR}:\T_g\to\R^\calR$ is a submersion at $Y$. By the implicit function theorem, the inverse image of the diagonal by this map is a submanifold of codimension $2g-1$ near $Y$. Since the curves in $\calR$ fill, a small neighborhood of $Y$ in this submanifold is contained in $\X_g$. The curves in $\calR$ fill because the complement of the curves in $\calS$ is a union of four polygons which meet at the intersection of $\alpha_{2g+1}$ and $\alpha_{2g+2}$. Adding these two curves fuses the four polygons into a single one.   
\end{proof}

When $g$ is odd, the matrix $\wtilde D$ is singular, but this does not ne\-cessarily imply that the image of $(\ell_\alpha)_{\alpha \in \calS}$ is not transverse to the diagonal.

\subsection*{An example of girth 3}

Next, we present an example of genus $g=6$ where the underlying graphs $M$ and $G$ for the surface $X$ have girth $3$ and the matrix $\wtilde D$ is non-singular. Let $M$ be the $1$-skeleton of a regular tetrahedron (as a map of type $\{3,3\}$ on the sphere) and let $B$ be the corresponding block. This is a sphere with $4$ holes and tetrahedral symmetry. Although this does not fit in the theory of \secref{sec:gluing}, it is possible to glue five copies of $B$ along the complete graph $K_5$ in such a way that the blue arcs connect up in groups of three to form closed geodesics. To see this, it is convenient to draw $K_5$ in $\R^3$ with a $3$-fold symmetry as in \figref{figure}.
\begin{figure}[htp]
\centering
{\includegraphics[width=\textwidth]{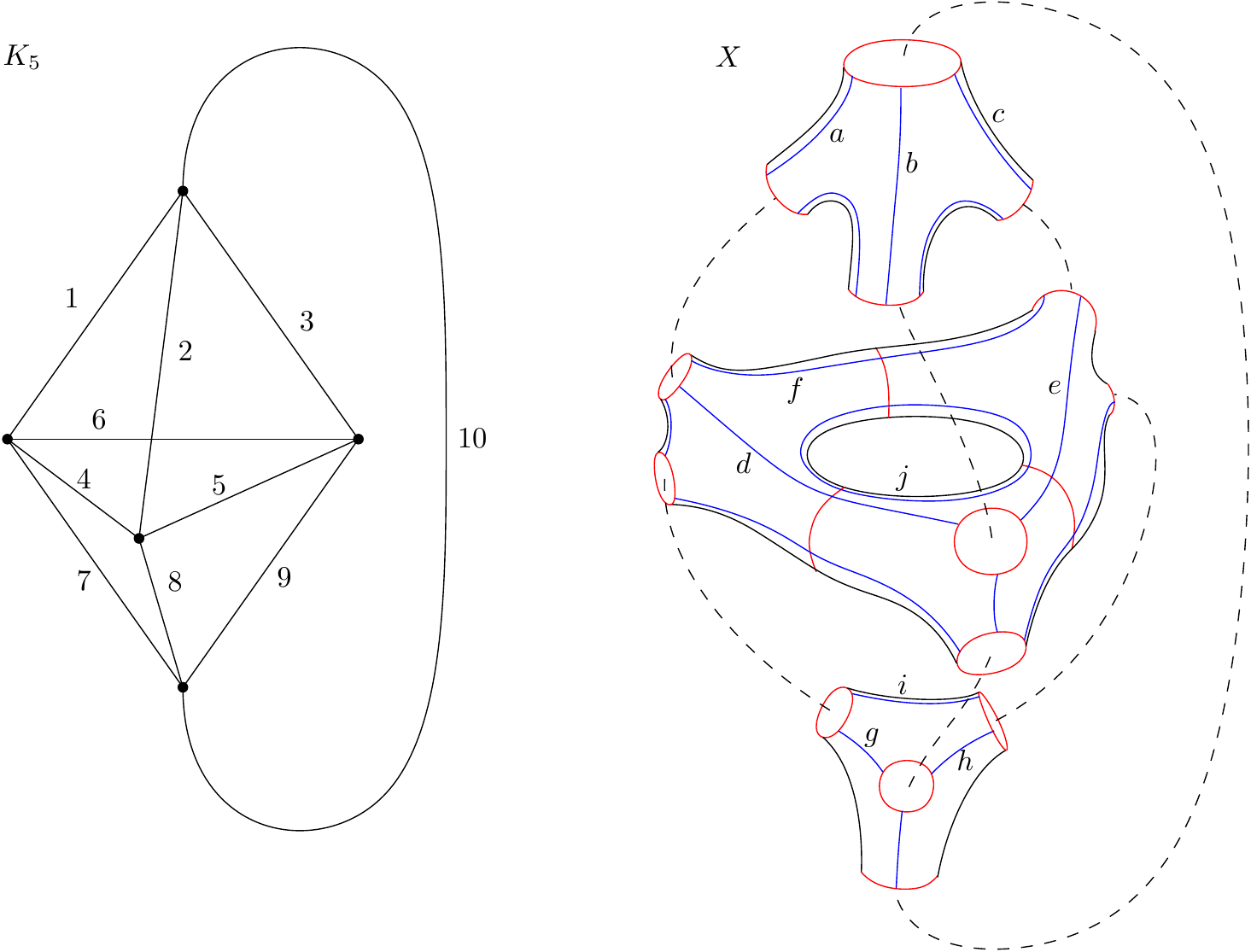}}
\caption{An embbeding of $K_5$ in $\R^3$ and the corresponding gluing of tetrahedral blocks.}\label{figure}
\end{figure}

The tetrahedral pieces are glued as suggested by the figure, in the simplest possible way (without twist). By inspection, the blue arcs connect in groups of three. The proof of \propref{prop:count} applies without change to show that the systoles in $X$ are the red curves and the blue curves. The genus of $X$ is equal to the number of edges in the complement of any spanning tree in $K_5$, which is $10-4=6$. Let us label the red curves from $1$ to $10$ and the blue curves from $a$ to $j$ as in \figref{figure} (the red curves correspond to the edges in $K_5$). Then the intersection matrix $A$ is given by
\[
A = \begin{pmatrix}
1 & 0 & 0 & 1 & 0 & 1 & 0 & 0 & 0 & 0\\ 
0 & 1 & 0 & 1 & 1 & 0 & 0 & 0 & 0 & 0\\ 
0 & 0 & 1 & 0 & 1 & 1 & 0 & 0 & 0 & 0\\ 
0 & 0 & 0 & 1 & 0 & 0 & 1 & 0 & 0 & 1\\ 
0 & 0 & 0 & 0 & 1 & 0 & 0 & 1 & 0 & 1\\ 
0 & 0 & 0 & 0 & 0 & 1 & 0 & 0 & 1 & 1\\ 
1 & 0 & 0 & 0 & 0 & 0 & 1 & 0 & 1 & 0\\ 
0 & 1 & 0 & 0 & 0 & 0 & 1 & 1 & 0 & 0\\ 
0 & 0 & 1 & 0 & 0 & 0 & 0 & 1 & 1 & 0\\ 
1 & 1 & 1 & 0 & 0 & 0 & 0 & 0 & 0 & 0\\ 
\end{pmatrix}
\]
which has determinant $48 \neq 0$. Therefore, the derivative of lengths $d\ell$ has full rank at $X_\theta$ whenever $\theta \neq \pi /2$. Note that this only gives us that $\X_g$ has codimension at most $19$ in $\T_g$, hence dimension at least $11 = 4g - 13$. The conclusion is weaker than that of \thmref{thm:dimension}, but we wanted to include this example to show that $\widetilde D$ can have full rank for more complicated graphs.

\subsection*{Questions}

We conclude with a few questions related to the strategy we have just outlined.

\begin{question}
Is there a sequence of graphs $M$ and $G$ as in \secref{sec:gluing} with girth going to infinity such that the corresponding intersection matrices $\wtilde D$ have non-zero determinants?
\end{question}

In view of the above reasoning and the counting of \propref{prop:count}, a po\-si\-tive answer would imply \conjref{conj:dimension}. A major difficulty is that $M$ and $G$ are given to us in a non-explicit way from \thmref{thm:reg_map} and \thmref{thm:poly_graph}.

As the proof of \thmref{thm:dimension} shows, one could bypass the determinant issue by fin\-ding a filling subset $\calR \subset \calS$ of even cardinality such that the corresponding intersection graph $I_\calR$ is a tree, and a surface $Y$ near $X_\theta$ whose systoles are exactly the curves in $\calR$.

\begin{question}
Given a surface $X$ constructed as in \secref{sec:gluing} with set of systoles $\calS$, is there an induced subtree in $I_\calS$ with an even number of vertices such that the union of the corresponding curves fill? 
\end{question}

\begin{question}
Let $X$ be any hyperbolic surface, let $\calS$ be its set of systoles and let $\calR \subset \calS$ be a non-empty subset. Does there exist, in every neighborhood of $X$, a surface whose set of systoles is equal to $\calR$?
\end{question}

Even if these questions have negative answers, they suggest how one should modify the construction of surfaces with sublinearly many systoles that fill in order to show that $\X_g$ has large dimension: the systoles should cut the surface into a single polygon instead of several.

\bibliographystyle{amsalpha}
\bibliography{biblio}

\end{document}